\def\udcs{515.1} %Here the author places classificators according to https://en.wikipedia.org/wiki/Universal_Decimal_Classification
\def\mscs{54A25} %Here the author places classificators according to the AMS classification list
\newtheorem{lemma}{Lemma}
\newtheorem{theorem}{Theorem}
\newtheorem{definition}{Definition}
\newtheorem{corollary}{Corollary}
\def\logo{{\bf\huge S\raisebox{0.2ex}{\hspace{0.55ex}\raisebox{0.05ex}e\hspace{-1.65ex}$\bigcirc$}MR}}
\def\semrtop
\hfill\raisebox{1ex}{ISSN 1813-3304}\par
\LARGE\tt{http://semr.math.nsc.ru}}\\[0.5mm]
     \par\vspace{1mm}
\begin{document}
\thispagestyle{empty}

\title[On resolvability of Lindel\"of generated spaces]{On resolvability of Lindel\"of generated spaces}
\author{{Maria~A.~Filatova}}%
\address{Maria Alexandrovna Filatova
\newline\hphantom{iii} Ural Federal University,
\newline\hphantom{iii} 19 Mira str.,
\newline\hphantom{iii} 620002, Yekaterinburg, Russia
\newline\hphantom{iii} Krasovskii Institute of Mathematics and Mechanics,
\newline\hphantom{iii} 16 S.Kovalevskaya str.
\newline\hphantom{iii} 620990, Yekaterinburg, Russia}%
\email{MA.Filatova@urfu.ru}%

\author{{Alexander~V.~Osipov}}%
\address{Alexander Vladimirovich Osipov
\newline\hphantom{iii} Krasovskii Institute of Mathematics and Mechanics,
\newline\hphantom{iii} 16 S.Kovalevskoy str.,
\newline\hphantom{iii} 620990, Yekaterinburg, Russia;
\newline\hphantom{iii} Ural Federal  University,
\newline\hphantom{iii} 19 Mira str.,
\newline\hphantom{iii} 620002, Yekaterinburg, Russia;
\newline\hphantom{iii} Ural State University of Economics,
\newline\hphantom{iii} 62, 8th of March str.,
\newline\hphantom{iii} 620219, Yekaterinburg, Russia.}%
\email{oab@list.ru}%

\thanks{\sc Filatova, M.A., and Osipov, A.V.,
On resolvability of Lindel\"of generated spaces}
\thanks{\copyright \ 2017 Filatova, M.A., and Osipov, A.V.}
\thanks{\rm The work is supported by the Russian Academic Excellence Project (agreement no.
02.A03.21.0006 of August 27, 2013, between the Ministry of Education and Science of the Russian Federation and Ural Federal University)
}
\thanks{\it Received  September 1, 2017, published  December 31,  2017.}%

\semrtop \vspace{1cm}
\maketitle {\small
\begin{quote}
\noindent{\sc Abstract.} In this paper we study the properties of
$\mathscr{P}$ generated spaces (by analogy with compactly
generated). We prove that a regular Lindel$\ddot{o}$f generated
space with uncountable dispersion character is resolvable. It is
proved that Hausdorff hereditarily $L$-spaces are $L$-tight spaces
which were defined by Istv\'{a}n Juh\'{a}sz, Jan van Mill in ({\it
Variations on countable tightness},  arXiv:1702.03714v1). We also
prove $\omega$-resolvability of regular $L$-tight space with
uncountable dispersion character.

\noindent{\bf Keywords:} resolvable space, $k$-space, tightness,
$\omega$-resolvable space, Lindel$\ddot{o}$f generated space,
$\mathscr{P}$ generated space, $\mathscr{P}$-tightness.
 \end{quote}
}

\section{Introduction}

A Hausdorff space is said to be a $k$-space (also called compactly
generated) if it has the final topology with respect to all
inclusions $K \mapsto X$ of compact subspaces $K$ of $X$, so that
a set $A$ in $X$ is closed in $X$ if and only if $A \cap K$ is
closed in $K$ for all compact subspaces $K$ of $X$. A space is
Fr$\acute{e}$chet-Uryson if, whenever a point $x$ is in the
closure of a subset $A$, there is a sequence from $A$ converging
to $x$; it is proved in \cite{A_1} that a space is hereditarily
$k$, i.e., every subspace is a $k$-space, if and only if it is
Fr$\acute{e}$chet-Uryson. For examples, any locally compact, or
the first countably Hausdorff spaces are $k$-spaces.

An interesting common generalizations of $k$-spaces and the notion
of tightness was given by A.V.~Arhangel'skii and D.N.~Stavrova in
\cite{A_S}. They defined the $k$, $k_1$, $k^{\ast}$-tightness as
follows: the $k$-tightness of $X$ does not exceed $\tau$ $(t_k
(X)\leq\tau )$ if and only if for every $A\subseteq X$ that is not
closed there exists a $\tau-$compact $B\subseteq X$ for which
$A\cap B$ is not closed in $X$ (a set $B$ is called $\tau-$compact
if $B= \bigcup \{ B_{\alpha} :$  $\alpha \in \tau \},$ where
$B_{\alpha}$ is a compact subset of $X$ for all $\alpha \in
\tau);$ the $k_1-$tightness of $X$ does not exceed $\tau$
$(t_{k_1} (X)\leq\tau )$ if and only if for every $A\subseteq X$
and every $x\in \overline{A}$ there exists a $\tau-$compact
$B\subseteq X$ such that  $x\in \overline{A\cap B};$ the
$k^\ast-$tightness of $X$ does not exceed $\tau$ $(t^{\ast}_k
(X)\leq\tau )$ if and only if for every $A\subseteq X$ and every
$x\in \overline{A}$ there exists a $\tau-$compact $B\subseteq A$
such that is $x\in \overline{B}.$

In \cite{J_M} Istv\'{a}n Juh\'{a}sz and Jan Van Mill defined and
studied nine attractive natural tightness conditions for
topological spaces. They called a space $\mathscr{P}-$tight, if
for all $x\in X$ and $A\subseteq X$ such that $x\in \overline{A},$
there exists $B\subseteq A$ such that $x\in \overline{B}$ and $B$
has the property~$\mathscr{P}.$

Istv\'{a}n Juh\'{a}sz and Jan Van Mill considered in \cite{J_M} the following properties $\mathscr{P}$
that a subspace of a topological space might have :

\smallskip

\begin{tabular}{r l}
   % after \\: \hline or \cline{col1-col2} \cline{col3-col4} ...
  $\omega$D & Countable discrete; \\
  $\omega$N & Countable and nowhere dense; \\
  $C_2$ & Second-countable; \\
  $\omega$ & Countable; \\
  hL & Hereditarily Lindel\"of; \\
  $\sigma$-cmp & $\sigma-$compact; \\
  ccc & The countable chain condition; \\
  L & Lindel\"of;\\
  wL & Weakly Lindel\"of. \\
\end{tabular}

\medskip

Inspired by the researches above, in the first part of this paper
we introduce and study $\mathscr{P}$ generated spaces (by analogy
with $k$-spaces). Since the space with property $\mathscr{P}$ is
not necessarily closed,  there are two ways to determine the
$\mathscr{P}$ generated spaces.
\begin{definition}\label{P-s}
A topological space $X$ is $\mathscr{P}-$space ($\mathscr{P}$ generated space) if a subspace $A$ is closed in $X$ if and only if
$A \cap P$ is closed in $P$ for any subspace $P \subseteq X$ which has the property $\mathscr{P}.$
\end{definition}
\begin{definition}\label{Pc-s}
A topological space $X$ is $\mathscr{P}c-$space if  a set $A$ is
closed in $X$ if and only if $A \cap P$ is closed in $P$ (or, is
the same one, in $X$) for any closed subspace $P\subseteq X$ which
has the property $\mathscr{P}.$
\end{definition}

In this paper we will consider the property $\mathscr{P}\in
\{\omega$N, $C_2$, $\omega$, $hL$, $\sigma$-$cmp$, $ccc$, $L$, $wL
\}$ because each $\omega$D-space is discrete. Note that every
space with countable tightness (i.e. $\omega$-tight) is
$\omega-$space \cite{Mic}.
\smallskip

 The class of $L$-spaces generalizes the class
of spaces with countable tightness and the class of $k$-spaces.
Below (Theorem \ref{Ex1}) we get the example of the space with
countable tightness, but is not $Lc$-space.

\smallskip
Note that the relationships between the spaces of
$\mathscr{P}-$tight where the property $\mathscr{P}\in \{\omega$N,
$\omega$D, $C_2$, $\omega$, $hL$, $\sigma$-$cmp$, $ccc$, $L$, $wL
\}$ were considered in \cite{J_M}.

We summarize the relationships between $\mathscr{P}-$spaces
($\mathscr{P}-$s) and  $\mathscr{P}-$tight ($\mathscr{P}-$t) where
the property $\mathscr{P}\in \{\omega$N, $\omega$, $hL$,
$\sigma$-$cmp$, $ccc$, $L$, $wL \}$ in next diagrams.

\begin{center}
% This is a LaTeX picture output by TeXCAD.
% File name: [Clipboard].
% Version of TeXCAD: 4.5
% Reference / build: 13-Sep-2015 (rev. a64)
% For new versions, check: http://texcad.sf.net/
% Options on the following lines.
%\grade{\on}
%\emlines{\off}
%\epic{\off}
%\beziermacro{\on}
%\reduce{\on}
%\snapping{\off}
%\pvinsert{% Your \input, \def, etc. here}
%\quality{8.000}
%\graddif and only if{0.005}
%\snapasp{1}
%\zoom{6.7272}
\unitlength 1mm % = 2.845pt
\linethickness{0.4pt}
\ifx\plotpoint\undefined\newsavebox{\plotpoint}\fi % GNUPLOT compatibility
\begin{picture}(86.069,66.596)(0,0)
\put(35.676,0){\makebox(0,0)[cc]{wL-s}}
\put(61.244,13.23){\makebox(0,0)[cc]{L-s}}
\put(86.069,28.244){\makebox(0,0)[cc]{$\sigma$-cmpt-s}}
\put(83.839,66.15){\makebox(0,0)[cc]{$\sigma$-cmpt-t}}
\put(60.055,52.028){\makebox(0,0)[cc]{L-t}}
\put(24.825,52.028){\makebox(0,0)[cc]{hL-t}}
\put(24.973,13.825){\makebox(0,0)[cc]{hL-s}}
\put(51.731,28.838){\makebox(0,0)[cc]{$\omega$-s}}
%\vector[middle](83.839,63.623)(83.839,31.217)
\put(83.839,47.42){\vector(0,-1){.07}}\put(83.839,63.623){\line(0,-1){32.4059}}
%\end
%\vector[middle](1.487,36.568)(1.487,4.46)
\put(1.487,20.514){\vector(0,-1){.07}}\put(1.487,36.568){\line(0,-1){32.1086}}
%\end
%\vector[middle]{dash}{1}(21.108,11.892)(4.014,2.676)
\put(12.561,7.284){\vector(-2,-1){.07}}\multiput(21.038,11.822)(-.0581458,-.0313481){14}{\line(-1,0){.0581458}}
\multiput(19.41,10.944)(-.0581458,-.0313481){14}{\line(-1,0){.0581458}}
\multiput(17.782,10.066)(-.0581458,-.0313481){14}{\line(-1,0){.0581458}}
\multiput(16.154,9.189)(-.0581458,-.0313481){14}{\line(-1,0){.0581458}}
\multiput(14.526,8.311)(-.0581458,-.0313481){14}{\line(-1,0){.0581458}}
\multiput(12.898,7.433)(-.0581458,-.0313481){14}{\line(-1,0){.0581458}}
\multiput(11.27,6.555)(-.0581458,-.0313481){14}{\line(-1,0){.0581458}}
\multiput(9.642,5.678)(-.0581458,-.0313481){14}{\line(-1,0){.0581458}}
\multiput(8.013,4.8)(-.0581458,-.0313481){14}{\line(-1,0){.0581458}}
\multiput(6.385,3.922)(-.0581458,-.0313481){14}{\line(-1,0){.0581458}}
\multiput(4.757,3.044)(-.0581458,-.0313481){14}{\line(-1,0){.0581458}}
%\end
%\vector[middle]{dash}{1}(24.825,48.312)(24.973,18.433)
\put(24.899,33.372){\vector(0,-1){.07}}\put(24.754,48.241){\line(0,-1){.996}}
\put(24.764,46.249){\line(0,-1){.996}}
\put(24.774,44.257){\line(0,-1){.996}}
\put(24.784,42.265){\line(0,-1){.996}}
\put(24.794,40.274){\line(0,-1){.996}}
\put(24.804,38.282){\line(0,-1){.996}}
\put(24.814,36.29){\line(0,-1){.996}}
\put(24.824,34.298){\line(0,-1){.996}}
\put(24.834,32.306){\line(0,-1){.996}}
\put(24.844,30.314){\line(0,-1){.996}}
\put(24.854,28.322){\line(0,-1){.996}}
\put(24.863,26.33){\line(0,-1){.996}}
\put(24.873,24.338){\line(0,-1){.996}}
\put(24.883,22.346){\line(0,-1){.996}}
\put(24.893,20.354){\line(0,-1){.996}}
%\end
%\vector[middle](56.785,66.298)(75.812,66.298)
\put(66.298,66.298){\vector(1,0){.07}}\put(56.785,66.298){\line(1,0){19.0273}}
%\end
%\vector[middle](28.987,52.325)(55.447,52.325)
\put(42.217,52.325){\vector(1,0){.07}}\put(28.987,52.325){\line(1,0){26.46}}
%\end
%\vector[middle](5.054,.743)(31.514,.743)
\put(18.284,.743){\vector(1,0){.07}}\put(5.054,.743){\line(1,0){26.46}}
%\end
%\vector[middle]{dash}{1}(29.879,13.973)(50.839,13.973)
\put(40.359,13.973){\vector(1,0){.07}}\put(29.809,13.903){\line(1,0){.9981}}
\put(31.805,13.903){\line(1,0){.9981}}
\put(33.801,13.903){\line(1,0){.9981}}
\put(35.797,13.903){\line(1,0){.9981}}
\put(37.793,13.903){\line(1,0){.9981}}
\put(39.789,13.903){\line(1,0){.9981}}
\put(41.786,13.903){\line(1,0){.9981}}
\put(43.782,13.903){\line(1,0){.9981}}
\put(45.778,13.903){\line(1,0){.9981}}
\put(47.774,13.903){\line(1,0){.9981}}
\put(49.77,13.903){\line(1,0){.9981}}
%\end
\put(0,.446){\makebox(0,0)[cc]{ccc-s}}
\put(34.784,38.055){\makebox(0,0)[cc]{wL-t}}
%\vector[middle](61.096,48.906)(61.096,17.987)
\put(61.096,33.446){\vector(0,-1){.07}}\put(61.096,48.906){\line(0,-1){30.919}}
%\end
\put(1.784,39.244){\makebox(0,0)[cc]{ccc-t}}
%\vector[middle](5.649,39.541)(32.109,39.541)
\put(18.879,39.541){\vector(1,0){.07}}\put(5.649,39.541){\line(1,0){26.46}}
%\end
%\vector[middle]{dash}{1}(47.271,27.203)(30.027,17.838)
\put(38.649,22.521){\vector(-2,-1){.07}}\multiput(47.201,27.133)(-.0586514,-.0318538){14}{\line(-1,0){.0586514}}
\multiput(45.558,26.241)(-.0586514,-.0318538){14}{\line(-1,0){.0586514}}
\multiput(43.916,25.349)(-.0586514,-.0318538){14}{\line(-1,0){.0586514}}
\multiput(42.274,24.457)(-.0586514,-.0318538){14}{\line(-1,0){.0586514}}
\multiput(40.632,23.565)(-.0586514,-.0318538){14}{\line(-1,0){.0586514}}
\multiput(38.989,22.673)(-.0586514,-.0318538){14}{\line(-1,0){.0586514}}
\multiput(37.347,21.781)(-.0586514,-.0318538){14}{\line(-1,0){.0586514}}
\multiput(35.705,20.889)(-.0586514,-.0318538){14}{\line(-1,0){.0586514}}
\multiput(34.063,19.998)(-.0586514,-.0318538){14}{\line(-1,0){.0586514}}
\multiput(32.421,19.106)(-.0586514,-.0318538){14}{\line(-1,0){.0586514}}
\multiput(30.778,18.214)(-.0586514,-.0318538){14}{\line(-1,0){.0586514}}
%\end
%\vector[both]{dash}{1}(51.433,61.69)(51.582,32.555)
\put(51.582,32.555){\vector(0,-1){.07}}\put(51.433,61.69){\vector(0,1){.07}}\put(51.363,61.62){\line(0,-1){.9712}}
\put(51.373,59.677){\line(0,-1){.9712}}
\put(51.383,57.735){\line(0,-1){.9712}}
\put(51.393,55.793){\line(0,-1){.9712}}
\put(51.403,53.85){\line(0,-1){.9712}}
\put(51.412,51.908){\line(0,-1){.9712}}
\put(51.422,49.966){\line(0,-1){.9712}}
\put(51.432,48.023){\line(0,-1){.9712}}
\put(51.442,46.081){\line(0,-1){.9712}}
\put(51.452,44.138){\line(0,-1){.9712}}
\put(51.462,42.196){\line(0,-1){.9712}}
\put(51.472,40.254){\line(0,-1){.9712}}
\put(51.482,38.311){\line(0,-1){.9712}}
\put(51.492,36.369){\line(0,-1){.9712}}
\put(51.502,34.427){\line(0,-1){.9712}}
%\end
%\vector[middle](57.082,10.554)(40.73,2.23)
\put(48.906,6.392){\vector(-2,-1){.07}}\multiput(57.082,10.554)(-.0662008009,-.0337022259){247}{\line(-1,0){.0662008009}}
%\end
%\vector[middle](79.826,23.784)(63.474,15.46)
\put(71.65,19.622){\vector(-2,-1){.07}}\multiput(79.826,23.784)(-.0662024291,-.0337004049){247}{\line(-1,0){.0662024291}}
%\end
%\vector[middle](54.258,48.311)(37.906,39.987)
\put(46.082,44.149){\vector(-2,-1){.07}}\multiput(54.258,48.311)(-.0662024291,-.0337004049){247}{\line(-1,0){.0662024291}}
%\end
%\vector[middle](19.92,49.946)(3.568,41.622)
\put(11.744,45.784){\vector(-2,-1){.07}}\multiput(19.92,49.946)(-.0662024291,-.0337004049){247}{\line(-1,0){.0662024291}}
%\end
%\vector[middle](44.298,63.325)(27.946,55.001)
\put(36.122,59.163){\vector(-2,-1){.07}}\multiput(44.298,63.325)(-.0662024291,-.0337004049){247}{\line(-1,0){.0662024291}}
%\end
%\vector[middle](80.123,61.69)(63.771,53.366)
\put(71.947,57.528){\vector(-2,-1){.07}}\multiput(80.123,61.69)(-.0662024291,-.0337004049){247}{\line(-1,0){.0662024291}}
%\end
%\vector[middle]{dash}{1}(56.487,29.284)(77.446,29.284)
\put(66.967,29.284){\vector(1,0){.07}}\put(56.417,29.214){\line(1,0){.998}}
\put(58.413,29.214){\line(1,0){.998}}
\put(60.409,29.214){\line(1,0){.998}}
\put(62.405,29.214){\line(1,0){.998}}
\put(64.401,29.214){\line(1,0){.998}}
\put(66.398,29.214){\line(1,0){.998}}
\put(68.394,29.214){\line(1,0){.998}}
\put(70.39,29.214){\line(1,0){.998}}
\put(72.386,29.214){\line(1,0){.998}}
\put(74.382,29.214){\line(1,0){.998}}
\put(76.378,29.214){\line(1,0){.998}}
%\end
%\vector[middle](36.271,33.892)(36.271,2.973)
\put(36.271,18.433){\vector(0,-1){.07}}\put(36.271,33.892){\line(0,-1){30.919}}
%\end
\put(48.46,66.596){\makebox(0,0)[cc]{$\omega$-t}}
\end{picture}

\medskip

Fig.~1. The Diagram of the relationships between \\
$\mathscr{P}-$spaces and $\mathscr{P}-$tight.

\end{center}

In 1943 E.~Hewitt \cite{Hu43} called a topological space
$\tau$-resolvable, if it can be represented as a union of $\tau$
dense disjoint subsets. A 2-resolvable space is called resolvable,
and irresolvable space is one which is not resolvable. He also
defined the dispersion character $\Delta(X)$ of a space $X$  as
the smallest size of a non-empty open subset of $X.$ A topological
space $X$ is called maximally resolvable if it is
$\Delta(X)-$resolvable.

The $\omega$-resolvability of Lindel\"of spaces whose dispersion character is uncountable was proved by I.~Juhasz, L.~Soukup, Z.~Szentmiklossy  in \cite{JSS}.

E.~Hewitt in \cite{Hu43} constructed an example of countable
irresolvale normal space, so condition $\Delta(X)>\omega$ is
natural. V.I.~Malykhin constructed an example of irresolvale
Hausdorf Lindel\"of space with uncountable dispersion character in
\cite{Mal98}, therefore, resolvability of regular Lindel\"of
spaces was studied.

The resolvability of locally compact spaces was proved by Hewitt
in 1943 \cite{Hu43}. The resolvability (maximal resolvability) of
$k$-spaces was proved by N.V.~Velichko in 1976 \cite{Vel}
(E.G.~Pytkeev in 1983 \cite{Pyt}). In connection with these
results, the question of the resolvability of $L$-spaces is
natural.

Second part of this paper is devoted to resolvability of regular
$L$-spaces of uncountable dispersion character. We also prove
$\omega$-resolvability of regular hereditarily $L$-spaces of
uncountable dispersion character.

Throughout this paper the symbol $\omega$ denotes the smallest infinite cardinal,
$\omega_1$ stands for the smallest uncountable cardinal. For a subset $A$ of a topological space $X$, the closure and
interior set of $A$ are respectively denotes by $\overline{A}$ (or $[A]$) and $Int(A).$
We assume that all spaces are Hausdorff. Notation and terminology are taken from \cite{Eng}.

%\begin{theorem}
%Text of the theorem
%\end{theorem}

%\begin{proof}
%Text of the proof.
%\end{proof}

\section{ $\mathscr{P}$ generated space}

By Definitions \ref{P-s} and \ref{Pc-s},  a $\mathscr{P}c-$space
is a $\mathscr{P}-$space. The following example shows that the
converse is not true.

\begin{theorem}\label{Ex1}

There exists a space $X$ such that $X$ is a $\omega-$space and
$t_k (X)=t_{k_1} (X)= t^{\ast}_k (X))=t(X)= \omega$.
 \end{theorem}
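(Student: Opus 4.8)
The plan is to reduce the theorem to a single construction and to read off the listed tightness equalities for free. Once I have produced a Hausdorff space $X$ of countable tightness, $t(X)=\omega$, everything else follows: a countable set is a countable union of (compact) singletons, hence $\omega$-compact, so any countable set witnessing $x\in\overline{A}$ is simultaneously an $\omega$-compact witness for each of the three $k$-type tightnesses; this yields $t_k(X)\le t_{k_1}(X)\le t^{\ast}_k(X)\le t(X)=\omega$, and the reverse inequalities are the usual convention that these cardinal functions are at least $\omega$. Moreover, by the cited fact that every space of countable tightness is an $\omega$-space \cite{Mic}, $X$ is automatically an $\omega$-space in the sense of Definition~\ref{P-s}. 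Thus the whole content of the statement is the construction of a concrete countably tight Hausdorff space which, as promised in the introduction, also witnesses the failure of the $Lc$-property of Definition~\ref{Pc-s}.

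Next I would isolate what the failure of the $Lc$-property demands: a non-closed $A\subseteq X$ with $A\cap P$ closed in $P$ for \emph{every} closed Lindel\"of subspace $P$. Fix $x\in\overline{A}\setminus A$ and, using $t(X)=\omega$, a countable $C\subseteq A$ with $x\in\overline{C}$. The set $\overline{C}$ is closed; since $C\subseteq A\cap\overline{C}$ we have $x\in\overline{C}\subseteq\overline{A\cap\overline{C}}$ while $x\notin A\cap\overline{C}$, so $\overline{C}$ already detects the non-closedness of $A$ \emph{unless} $\overline{C}$ fails to be Lindel\"of. Hence the space must contain a countable set whose closure is not Lindel\"of, and $A$ must be arranged so that no closed Lindel\"of subspace captures the accumulation of $A$ at $x$. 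This is exactly the tension that makes the example nontrivial: countable tightness forces every limit point to be reached by a countable (Lindel\"of) set, so the accumulation must take place only through countable sets of non-Lindel\"of closure.

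The cleanest device I would use to control all closed Lindel\"of subspaces simultaneously is countable compactness. If $X$ is countably compact, then every closed subspace is countably compact, and a countably compact Lindel\"of space is compact; therefore in such an $X$ the closed Lindel\"of subspaces are precisely the compact ones. Consequently $X$ is an $Lc$-space if and only if it is a $k$-space, and the task collapses to producing a countably compact, countably tight space that is \emph{not} a $k$-space. For such a space the witness $A$ of non-$k$-ness (a non-closed set meeting every compact set in a closed set) is automatically a witness of non-$Lc$-ness, and the accompanying countable set of non-Lindel\"of closure is simply a countable set whose closure is countably compact but not compact, hence not Lindel\"of.

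The main obstacle is therefore the construction itself: I must build a Hausdorff, countably compact, countably tight space that is not a $k$-space---equivalently, one containing a separable, countably compact, non-compact, countably tight closed subspace. The delicacy is that countable tightness requires each accumulation point to be captured by a countable subset, while non-compactness of a separable closed piece forbids some countable set from having compact closure, and at the same time I must avoid first countability (indeed the Fr\'echet property), since a Fr\'echet space is hereditarily $k$ and hence an $Lc$-space. I would obtain the non-compact-but-countably-compact separable piece from a combinatorial gadget on $\omega_1$ (a tower, an almost disjoint family, or a free sequence) arranged so that the uncountably many accumulation points carried by a single countable set are pairwise separated yet have uncountable character; verifying simultaneously Hausdorffness, countable compactness, countable tightness, and the explicit non-closed set exhibiting the failure of the $k$-property is the crux of the argument.
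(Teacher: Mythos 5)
Your opening paragraph is fine as far as it goes: the chain $t_k(X)\le t_{k_1}(X)\le t^{\ast}_k(X)\le t(X)$ via the observation that countable sets are $\omega$-compact matches the paper's remark that these equalities are ``obvious,'' and the citation of \cite{Mic} for ``countably tight $\Rightarrow$ $\omega$-space'' is exactly the reduction the paper uses. You also correctly diagnose that the real content of the theorem (as the introduction promises) is a concrete countably tight space failing the $Lc$- (and $\omega c$-) property. But that is precisely what you never deliver. Your proposal ends by naming the construction ``the crux of the argument'' and deferring it; everything after your countable-compactness reduction is a plan, not a proof. Since the theorem is an existence statement whose entire substance is the example, the proof is incomplete at its central point.

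Worse, the reduction steers you toward a target that is much harder than the paper's, and the specific gadgets you gesture at all fail for identifiable reasons: a Franklin--Rajagopalan space built from a tower is sequential, hence a $k$-space; a Mr\'owka-type space from an almost disjoint family is locally compact, hence a $k$-space; and an uncountable free sequence is incompatible with the countable tightness you must preserve. Whether a countably compact, countably tight, non-$k$ Hausdorff space even exists in ZFC is far from clear, so your route may need extra set-theoretic hypotheses the theorem does not assume. The paper's example avoids countable compactness entirely and is elementary: take $X=\beta\mathbb{N}$, keep the points of $\mathbb{N}$ isolated, and give each $x\in M=\beta\mathbb{N}\setminus\mathbb{N}$ the neighborhoods $(U(x)\setminus M)\cup\{x\}$, so every point of $M$ accumulates only through $\mathbb{N}$. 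Countable tightness and the $\omega$-space property are then immediate (test against the countable set $(A\cap\mathbb{N})\cup\{x\}$), while the closure of any infinite $B\subseteq\mathbb{N}$ contains an uncountable closed discrete subset of $M$; hence every closed countable (indeed every closed Lindel\"of) subspace meets $\mathbb{N}$ in a finite set, so the non-closed set $\mathbb{N}$ witnesses the failure of $\omega c$, $Lc$, and even $wLc$ simultaneously. Note in particular that the paper's space is \emph{not} countably compact ($M$ is uncountable closed discrete), so your reduction, even if completed, would produce a genuinely different and needlessly delicate example.
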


\begin{proof}
  Let $X = \beta \mathbb{N},$ $M = \beta \mathbb{N} \setminus \mathbb{N}.$
In a $x\in X$ we put the base of neighborhoods ${\mathscr B}(x) =
\{ x \},$ if $x\in \mathbb{N}$ and ${\mathscr B}(x) = \{ (U(x)
\setminus M) \cup \{ x \} \}$ where $U(x)$ is open in $\beta
\mathbb{N},$ if $x\in M.$

Now we show that $X$ is a $\omega-$space. Consider the set
$A\subset X$ for which $A \cap P$ is closed in $P$ for any
countable subspace $P \subseteq X.$ Let $x\in \overline{A}.$ The
set $P_1 = (A \cap \mathbb{N}) \cup \{ x \}$ is a countable. Then
$A \cap P_1$ is closed in $P_1$ therefore $x\in A.$

Let us note that if $B$ is infinity subset of $\mathbb{N}$ then $\overline{B}$ contains  uncountable discrete space. Consequently $\overline{B}$ is not a countable space.
Therefore, if $B\subset X$ is closed countable space, then the intersection
$\mathbb{N}\cap B$ is finite, and, consequently, closed in $X.$ But $\mathbb{N}$ is not closed in $X,$ hence, $X$ is not $\omega c-$space.

Being the $\omega-$space, $X$ is $hL$-space, $\sigma$-$cmp$-space,
$ccc$-space, L-space and $wL$-space. It is clear that $X$ is not
$hL$$c-$space, $\sigma-$cmp$c-$space, $ccc$$c-$space,
$L$$c-$space, $wL$$c-$space.

The equalities $t_k (X)=t_{k_1} (X)= t^{\ast}_k (X))=t(X)= \omega$
are obvious. Being $\omega-$tight a space $X$ is $wL$-tight too.
\end{proof}

\begin{corollary} There exists a space $X$ such that $X$ is a $\omega-$space, but is not $\omega c-$space.
\end{corollary}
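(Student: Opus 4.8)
The plan is simply to reuse the space $X$ constructed in the proof of Theorem~\ref{Ex1}, since that single example already witnesses both required properties; the corollary is really just an extraction of two facts established there. Recall that $X$ is $\beta\mathbb{N}$ re-topologized so that every point of $\mathbb{N}$ is isolated, while a point $x\in M=\beta\mathbb{N}\setminus\mathbb{N}$ retains only the traces $(U(x)\setminus M)\cup\{x\}$ of its $\beta\mathbb{N}$-neighborhoods.

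First I would record that $X$ is an $\omega$-space, exactly as in Theorem~\ref{Ex1}: if $A\subseteq X$ meets every countable subspace in a relatively closed set and $x\in\overline{A}$, then the countable set $P_1=(A\cap\mathbb{N})\cup\{x\}$ is itself such a subspace, and the closedness of $A\cap P_1$ in $P_1$ already forces $x\in A$. Hence every such $A$ is closed, so $X$ satisfies Definition~\ref{P-s} for $\mathscr{P}=\omega$. Next I would show that $X$ fails Definition~\ref{Pc-s} for $\mathscr{P}=\omega$, again following the argument in Theorem~\ref{Ex1}. The key observation is that the closure of any infinite subset of $\mathbb{N}$ contains an uncountable discrete subspace and is therefore uncountable; consequently, in $X$ no closed countable subspace $B$ can contain an infinite subset of $\mathbb{N}$, so $\mathbb{N}\cap B$ is always finite and hence closed in $X$. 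Were $X$ an $\omega c$-space, this would force $\mathbb{N}$ itself to be closed in $X$; but $\mathbb{N}$ is dense and not closed, a contradiction.

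The only step that merits any care is the claim about closures of infinite subsets of $\mathbb{N}$, which is the standard cardinality fact about $\beta\mathbb{N}$ (each such closure is a copy of $\beta\mathbb{N}$) already invoked in Theorem~\ref{Ex1}; every other step is immediate from the definitions. I therefore do not anticipate a genuine obstacle: the corollary follows at once by combining the two halves of the proof of Theorem~\ref{Ex1}, and the same space $X$ serves as the desired example of an $\omega$-space that is not an $\omega c$-space.
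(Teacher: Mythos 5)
Your proposal is correct and matches the paper exactly: the corollary is stated there without a separate proof precisely because it is an immediate extraction from Theorem~\ref{Ex1}, whose two halves (the $P_1=(A\cap\mathbb{N})\cup\{x\}$ argument for the $\omega$-space property, and the finiteness of $\mathbb{N}\cap B$ for closed countable $B$ against $\omega c$) you reproduce faithfully. No gaps; the cardinality fact about closures of infinite subsets of $\mathbb{N}$ is handled just as in the paper.
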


It is clear that any $k-$space is $Lc-$space (hence, $L$-space),
and every space with countable tightness is $hL$-space (therefore
$\sigma-$cmp-space, $ccc$-space, $L$-space and $wL$-space). The
converse statements are not true. Theorem \ref{Ex1} shows that
even $wL$$c-$spaces do not generalize a notion of countable
tightness.
 Thus, a natural generalization of spaces with countable tightness and $k$-spaces is $\mathscr{P}-$spaces.

\begin{theorem}\label{Ex2} There exists a space $X$ such that $X$
is a $L$-space, but is not $hL$-tight.
\end{theorem}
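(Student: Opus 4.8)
The plan is to exhibit a concrete Lindel\"of space that fails $hL$-tightness at a single point. Since every Lindel\"of space is automatically an $L$-space---taking $P=X$ in Definition \ref{P-s}, the requirement that $A\cap P$ be closed in $P$ for every Lindel\"of $P\subseteq X$ already forces $A=A\cap X$ to be closed---it suffices to produce a Lindel\"of Hausdorff space in which some point lies in the closure of a set $A$, yet no hereditarily Lindel\"of subset of $A$ accumulates at that point.

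First I would take the one-point Lindel\"ofication of an uncountable discrete space. Let $D$ be a discrete space with $|D|=\omega_1$, put $X=D\cup\{\infty\}$ with $\infty\notin D$, declare every point of $D$ isolated, and let the neighborhoods of $\infty$ be the sets $\{\infty\}\cup(D\setminus C)$ for $C\subseteq D$ countable. A routine verification shows that $X$ is Hausdorff (indeed regular) and Lindel\"of: in any open cover some member $U$ contains $\infty$, hence $D\setminus U$ is countable, and those remaining points are covered by countably many further members. Consequently $X$ is an $L$-space.

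Next I would set $A=D$ and $x=\infty$ and check the failure of $hL$-tightness. Every basic neighborhood of $\infty$ is co-countable in the uncountable set $D$, so it meets $D$; thus $\infty\in\overline{A}$. The two key observations are: (i) each $B\subseteq D$ carries the discrete subspace topology, and a discrete space is hereditarily Lindel\"of exactly when it is countable; (ii) if $B\subseteq D$ is countable, then $X\setminus B=\{\infty\}\cup(D\setminus B)$ is a neighborhood of $\infty$, so $B$ is closed and $\infty\notin\overline{B}$. Combining these, any hereditarily Lindel\"of $B\subseteq A$ is countable and therefore does not accumulate at $\infty$. Hence no hereditarily Lindel\"of $B\subseteq A$ satisfies $\infty\in\overline{B}$, and $X$ is not $hL$-tight.

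I do not anticipate a genuine obstacle, since the construction is elementary. The one step that requires care is the reduction in (i): showing that testing all hereditarily Lindel\"of subsets of $A$ collapses to testing only the countable ones, which rests on the facts that subspaces of $D$ inherit the discrete topology and that uncountable discrete spaces are not Lindel\"of. The only other point needing attention is the verification that $X$ is Lindel\"of, and hence an $L$-space, but this follows at once from the co-countable description of the neighborhoods of $\infty$.
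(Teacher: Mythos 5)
Your proposal is correct, and every step checks out: the one-point Lindel\"ofication $X=\{\infty\}\cup D$ of a discrete set $D$ with $|D|=\omega_1$ is Hausdorff and Lindel\"of, hence an $L$-space (as you note, $X$ itself is an admissible test subspace $P$ in Definition~\ref{P-s}); meanwhile $\infty\in\overline{D}$, yet every hereditarily Lindel\"of $B\subseteq D$ is a Lindel\"of discrete space, hence countable, hence missed by the neighborhood $\{\infty\}\cup(D\setminus B)$ of $\infty$. This is, however, a genuinely different example from the paper's, which takes $X=\beta D$, the \v{C}ech--Stone compactification of a discrete space $D$ of cardinality $\mathfrak{c}$: there compactness gives the $L$-property at once, and the witnessing point is some $p\in\beta D\setminus\bigcup\{\overline{A}:A\in[D]^{\leq\omega}\}$, whose existence rests on an implicit cardinality count (those closures cover at most $2^{\mathfrak{c}}$ of the $2^{2^{\mathfrak{c}}}$ points of $\beta D$). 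Both arguments pivot on the same key fact you isolate in your step (i)---a hereditarily Lindel\"of subspace of a discrete space must be countable---but they produce the point lying in $\overline{D}$ yet outside the closure of every countable subset of $D$ differently: the paper's $p$ exists by counting, whereas your $\infty$ is explicit by construction. The paper's route buys an off-the-shelf compact example with no new topology to define; your route buys a smaller ($|X|=\omega_1$ rather than $2^{2^{\mathfrak{c}}}$), fully concrete, self-contained example in which the failure of $hL$-tightness is verified directly rather than by a cardinality argument.
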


\begin{proof}

For example, let $D$ be the infinite discrete space of cardinality
continuum $\mathfrak{c}$, and $X=\beta D$ be the
$\check{C}$ech-Stone compactification of $D$. Being the compact
space, $X$ is a $L$-space. Consider $p\in X\setminus \{\bigcup
\overline{A} : A\in [D]^{\leq \omega} \}$. Since the space $D$ has
not an uncountable Lindel\"of subspaces, there is no Lindel\"of
subspace $B\subset D$ such that $p\in \overline{B}$.

\end{proof}

In what follows we shall concentrate on the study of some
properties of $\mathscr{P}-$spaces. Most of all we are interested
in the resolvability of $L$-space. The following statements will
be useful for these purpose.
\begin{theorem}\label{F} Let the property $\mathscr{P}\in \{\omega$N, $\omega$, $hL$, $\sigma$-$cmp$, $ccc$, $L$, $wL
\}$. Then the following properties of a space $X$ are equivalent.

  (i)  $X$ is a $\mathscr{P}-$space;

  (ii) a set $A\subset X$ is non-closed in $X$ if and only if
  there exists $P\subseteq X$ which  have a property $\mathscr{P}$ such that $P\cap A$ is non-closed in $X.$
\end{theorem}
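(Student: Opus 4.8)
The plan is to recognise that (i) and (ii) are two contrapositive encodings of a single assertion, so that the proof reduces to discarding the tautological half of each biconditional and matching the two surviving halves. Recall that by Definition~\ref{P-s} the statement ``$X$ is a $\mathscr{P}$-space'' means: for every $A\subseteq X$, $A$ is closed in $X$ if and only if $A\cap P$ is closed in $P$ for every $P\subseteq X$ having property~$\mathscr{P}$. I would begin by peeling off the half of this equivalence that costs nothing.

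The first routine step is the observation that, in any space, $A$ closed in $X$ forces $A\cap P$ to be closed in $P$ for \emph{every} subspace $P$, because $\mathrm{cl}_P(A\cap P)=\overline{A\cap P}\cap P\subseteq\overline{A}\cap P=A\cap P$. Consequently the ``only if'' half of Definition~\ref{P-s} is automatic, and ``$X$ is a $\mathscr{P}$-space'' is equivalent to the converse implication; its contrapositive says precisely that whenever $A$ is non-closed in $X$ there is some $P$ with property~$\mathscr{P}$ for which $A\cap P$ is non-closed. This is already the forward half of~(ii), so (i)$\Rightarrow$(ii) will be immediate once the two notions of non-closedness are aligned.

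The step I expect to be the main obstacle is the alignment between non-closedness of $A\cap P$ \emph{relative to $P$} and \emph{relative to $X$}; this is the only place where the subspace $P$ must be handled with care. One direction is free: since $A\cap P\subseteq P$, a set that is not closed in $P$ is a fortiori not closed in $X$. For the direction that actually carries the argument I would produce an explicit witness: if $A\cap P$ is non-closed in $P$, choose $y\in\mathrm{cl}_P(A\cap P)\setminus(A\cap P)=(\overline{A\cap P}\cap P)\setminus(A\cap P)$. Then $y\in P$ and $y\in\overline{A\cap P}\subseteq\overline{A}$, while $y\in P\setminus(A\cap P)$ forces $y\notin A$; hence $y\in\overline{A}\setminus A$ and $A$ is non-closed in $X$. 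Thus the existence of a test subspace $P$ on which $A\cap P$ fails to be closed is equivalent to $A$ itself failing to be closed, which supplies the remaining (converse) half of~(ii).

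Finally I would assemble the equivalence. After the tautological halves are removed, both (i) and (ii) reduce verbatim to the single statement that $A$ is non-closed in $X$ exactly when some $P\subseteq X$ with property~$\mathscr{P}$ meets $A$ in a non-closed set, so (i)$\Leftrightarrow$(ii) follows. I would stress that the particular property $\mathscr{P}\in\{\omega\mathrm{N},\omega,hL,\sigma\text{-}cmp,ccc,L,wL\}$ plays no structural role whatsoever: it merely names the family of admissible test subspaces, and the entire argument is a point-set computation with closures, the only subtlety being the $P$-versus-$X$ bookkeeping isolated in the previous paragraph.
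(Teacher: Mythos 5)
Your argument has a genuine gap, and it sits exactly at the step you yourself flag as ``the main obstacle.'' You correctly prove that non-closedness of $A\cap P$ \emph{in $P$} implies non-closedness in $X$, and that a witness $y\in\mathrm{cl}_P(A\cap P)\setminus(A\cap P)$ yields $y\in\overline{A}\setminus A$. But condition (ii) tests non-closedness of $P\cap A$ \emph{in $X$}, and the converse alignment your final paragraph silently uses --- that a trace non-closed in $X$ can be traded for one non-closed in $P$ --- is false: the witness of ambient non-closedness may lie outside $P$. Concretely, in $X=\mathbb{R}$ with $\mathscr{P}=\omega$, take $A=[0,1]$ and $P=\mathbb{Q}\cap(0,1)$: then $P\cap A=P$ is non-closed in $X$ but closed in $P$, while $A$ is closed. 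So the ``if'' half of (ii) is not a contrapositive re-encoding of Definition~\ref{P-s}, and your concluding claim that (i) and (ii) ``reduce verbatim to the single statement'' collapses. (This same example shows the backward half of (ii), read literally, can fail even in an $\omega$-space; it is in fact the delicate point in the paper's own treatment of that half, whose case $x\in A$ does not go through as written --- the robust formulation is Corollary~\ref{col_1}, where the witness $x\in\overline{A}\setminus A$ is quantified explicitly.)

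The missing idea is the one the paper actually deploys, and it is also why your closing remark that the property $\mathscr{P}$ ``plays no structural role whatsoever'' is wrong. In the direction (ii)$\Rightarrow$(i), given $A$ whose trace on every $\mathscr{P}$-subspace is closed and a $P$ with $P\cap A$ non-closed in $X$, the paper picks a witness $x\in\overline{P\cap A}\setminus(P\cap A)$ and passes to the enlarged test set $P_1=P\cup\{x\}$, on which $P_1\cap A$ fails to be closed; this is the mechanism that converts ambient non-closedness into relative non-closedness, at the price of adjoining one point. That step requires precisely that each property in $\{\omega\mathrm{N},\omega,hL,\sigma\text{-}cmp,ccc,L,wL\}$ be preserved under adjoining a single limit point of $P$ --- which fails for $\omega$D and can fail for $C_2$, explaining their absence from the theorem's hypothesis. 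Your proposal contains no substitute for this one-point-extension argument, and without it the halves of the equivalence involving non-closedness in $X$ cannot be established; indeed, as the $\mathbb{R}$ example shows, no purely closure-theoretic bookkeeping of the kind you propose can close them.
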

\begin{proof}
  $(i)\rightarrow (ii).$ Let $X$ is a $\mathscr{P}-$space, $A\subset X$ is non-closed in $X.$ Then there exists $P\subseteq X$ which  have a property $\mathscr{P}$ such that $P\cap A$ is non-closed in $P,$ therefore $P\cap A$  is non-closed in $X.$
  Let for a set $A$ there exists $P\subseteq X$ which  have
  a property~$\mathscr{P}$ such that $P\cap A$ is non-closed in $X.$
  Consider
 $x \in \overline{P\cap A} \setminus (P\cap A) \subseteq \overline{P}\cap \overline{A} \setminus (P\cap A).$
 If $x\in A$ then $x \in \overline{P}\setminus P,$, i.e. $P\cap A$ is non-closed in $P,$ consequently a set $A$ is non-closed in $X.$
 If $x\in P$ then $x \in \overline{A}\setminus A$, i.e. a set $A$ is non-closed in $X.$

 $(ii)\rightarrow (i).$ Consider $A\subset X$ such that $A \cap P$ is closed in $P$ for any subspace
 $P \subseteq X$ which has the property $\mathscr{P}.$ Suppose
 that $A\neq \overline{A}$. Then exists $P\subseteq X$ which  have a property $\mathscr{P}$ such that $P\cap A$ is non-closed in
 $X.$ Consider $x\in \overline{P\cap A}\setminus (P\cap A)$. Let
 $P_1=P\cup \{x\}$. Then $P_1$ has the property $\mathscr{P}$ and
 $P_1\cap A$ is not closed in $P_1$, contradiction.
\end{proof}

\begin{corollary}\label{col_1}
  A space X is $\mathscr{P}-$space if and only if for any $A$ non-closed in $X$
  there are $x \in \overline{A} \setminus A$ and $P\subseteq X$
  with a property~$\mathscr{P}$
  such that $x\in \overline{P\cap A}.$
\end{corollary}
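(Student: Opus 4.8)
The plan is to derive this corollary directly from Theorem~\ref{F}, since it is essentially a contrapositive reformulation of the equivalence $(i)\leftrightarrow(ii)$. The statement I must prove is that $X$ is a $\mathscr{P}$-space if and only if for every non-closed $A\subseteq X$ there exist a point $x\in\overline{A}\setminus A$ and a subspace $P\subseteq X$ with property $\mathscr{P}$ such that $x\in\overline{P\cap A}$. First I would observe that the condition ``$P\cap A$ is non-closed in $X$'' appearing in Theorem~\ref{F}(ii) should be unpacked into the existence of a witnessing point in $\overline{P\cap A}\setminus(P\cap A)$, which is exactly the kind of point the corollary asks for.

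For the forward direction, I would assume $X$ is a $\mathscr{P}$-space and take any non-closed $A$. By Theorem~\ref{F}(ii) there is some $P\subseteq X$ with property $\mathscr{P}$ for which $P\cap A$ is non-closed in $X$. Non-closedness gives a point $x\in\overline{P\cap A}\setminus(P\cap A)$. The only remaining task is to verify that this $x$ in fact lies in $\overline{A}\setminus A$, as required. Here I would use the inclusion $\overline{P\cap A}\subseteq\overline{A}$, so $x\in\overline{A}$ immediately; and since $x\notin P\cap A$ while I need $x\notin A$, I would invoke the same case analysis used in the proof of Theorem~\ref{F}: if $x\in A$ then $x\in\overline{P}\setminus P$, and one checks this still yields a valid witness (possibly after replacing $P$ by $P\cup\{x\}$, which retains property $\mathscr{P}$ for all the relevant classes). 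This delivers the desired $x$ and $P$ with $x\in\overline{P\cap A}$.

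For the converse, I would assume the stated condition and prove (ii) of Theorem~\ref{F}, which then gives that $X$ is a $\mathscr{P}$-space. Given a non-closed $A$, the hypothesis supplies $x\in\overline{A}\setminus A$ and $P$ with property $\mathscr{P}$ and $x\in\overline{P\cap A}$. Since $x\in\overline{P\cap A}$ but $x\notin A\supseteq P\cap A$, the point $x$ witnesses that $P\cap A$ is non-closed in $X$, which is precisely condition (ii). By Theorem~\ref{F} this forces $X$ to be a $\mathscr{P}$-space, closing the equivalence.

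The step I expect to be the main obstacle is the bookkeeping in the forward direction, namely confirming that the witnessing point $x$ can be arranged to lie outside $A$ rather than merely outside $P\cap A$. The resolution is the standard maneuver of enlarging $P$ to $P\cup\{x\}$ when necessary; I would check that each property $\mathscr{P}$ in the list $\{\omega\mathrm{N},\omega,hL,\sigma\text{-}cmp,ccc,L,wL\}$ is preserved under adding a single point, which is immediate since all these are either cardinality-type or covering-type properties invariant under finite augmentation. Everything else is a routine transcription of the closure inclusions already established in Theorem~\ref{F}.
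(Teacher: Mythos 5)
Your converse direction is essentially workable, but your forward direction has a genuine gap at exactly the step you dismiss as ``bookkeeping''. Theorem~\ref{F}(ii) only asserts that \emph{some} $P$ exists whose trace $P\cap A$ is non-closed in $X$; it gives you no control over where the witnesses of that non-closedness lie, and your proposed repair fails: if $x\in A$, then replacing $P$ by $P_1=P\cup\{x\}$ gives $P_1\cap A=(P\cap A)\cup\{x\}$, so $x$ is absorbed into the trace and ceases to be a witness, while $\overline{P_1\cap A}=\overline{P\cap A}$, so no new witness appears. Concretely, in $X=\mathbb{R}$ (an $\omega$-space) take $A=\{0\}\cup\{1/n:n\in\mathbb{N}\}\cup\{1+1/n:n\in\mathbb{N}\}$, which is non-closed since $1\in\overline{A}\setminus A$; the existential in \ref{F}(ii) may hand you $P=\{1/n:n\in\mathbb{N}\}$, whose only witness is $0\in A$, and $(P\cup\{0\})\cap A=P\cup\{0\}$ is closed, so your maneuver produces no point of $\overline{A}\setminus A$ at all. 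The correct route is to bypass \ref{F}(ii) and argue from Definition~\ref{P-s} directly: since $X$ is a $\mathscr{P}$-space and $A$ is non-closed, there is a $\mathscr{P}$-subspace $P$ with $A\cap P$ non-closed \emph{in $P$}; a witness of that lies in $P$, hence, being outside $A\cap P$, it is automatically outside $A$, and it lies in $\overline{P\cap A}\subseteq\overline{A}$. The information ``the witness lies in $P$'' is precisely what is discarded when you pass to non-closedness in $X$, and it cannot be recovered afterwards.

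In the converse you also cut a corner: condition (ii) of Theorem~\ref{F} is an inner biconditional, and you verify only the implication ``$A$ non-closed $\Rightarrow$ such a $P$ exists''; the reverse implication does not follow from your hypothesis (in the same space, $A=\{0\}\cup\{1/n:n\in\mathbb{N}\}$ is closed in $\mathbb{R}$, yet $P=\{1/n:n\in\mathbb{N}\}$ has trace non-closed in $X$), so you have not actually established (ii). Fortunately this direction closes easily without \ref{F}: assume $A\cap P$ is closed in $P$ for every $\mathscr{P}$-subspace $P$ and suppose $A$ is non-closed; your hypothesis gives $x\in\overline{A}\setminus A$ and $P$ with $x\in\overline{P\cap A}$, and now the enlargement trick does work, because $x\notin A$ forces $P_1\cap A=P\cap A$ for $P_1=P\cup\{x\}$, while $x\in P_1\cap\overline{P_1\cap A}\setminus(P_1\cap A)$, so $A\cap P_1$ is non-closed in $P_1$, contradicting the assumption. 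This is exactly the enlargement step in the paper's proof of Theorem~\ref{F}, and the whole point of the corollary's formulation is that the guaranteed $x\notin A$ is what makes that step legitimate; your write-up deploys the trick in the one case ($x\in A$) where it fails and omits it in the case where it is needed.
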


\begin{corollary}\label{col_2}
   For any non-isolated point $x$ in $\mathscr{P}-$space $X$
   there is a subspace $P\subseteq X$ with a property $\mathscr{P}$
   such that $x\in \overline{P\setminus \{ x \}}.$
\end{corollary}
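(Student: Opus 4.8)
The plan is to derive Corollary~\ref{col_2} as a direct specialization of Corollary~\ref{col_1}, which in turn rests on the equivalence established in Theorem~\ref{F}. The key observation is that a single non-isolated point furnishes the simplest possible witness to non-closedness, so I would feed the right set into the machinery already proved.

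First I would fix a non-isolated point $x$ in the $\mathscr{P}$-space $X$ and apply Corollary~\ref{col_1} to a cleverly chosen non-closed set. The natural candidate is $A = X \setminus \{x\}$. Since $x$ is non-isolated, every neighborhood of $x$ meets $X \setminus \{x\}$, so $x \in \overline{A}$; because $x \notin A$, the set $A$ is not closed in $X$. Thus Corollary~\ref{col_1} applies and yields a point $y \in \overline{A} \setminus A$ together with a subspace $P \subseteq X$ having property $\mathscr{P}$ such that $y \in \overline{P \cap A}$.

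Next I would identify $y$. Because $A = X \setminus \{x\}$, the only point of $X$ lying in $\overline{A} \setminus A$ is $x$ itself, so necessarily $y = x$. Moreover $P \cap A = P \cap (X \setminus \{x\}) = P \setminus \{x\}$. Substituting these two identifications into the conclusion $y \in \overline{P \cap A}$ gives exactly $x \in \overline{P \setminus \{x\}}$, which is the desired statement, with $P$ carrying property $\mathscr{P}$ as required.

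The main thing to watch is that the two set-theoretic identifications are genuinely forced rather than merely convenient: I must confirm that $\overline{A} \setminus A = \{x\}$ (this uses only $A = X \setminus \{x\}$ and $x \in \overline{A}$, so any point of $\overline{A} \setminus A$ must be the unique missing point $x$) and that $P \cap A = P \setminus \{x\}$ (immediate from the definition of $A$). Neither step presents a real obstacle; the entire content has already been absorbed into Corollary~\ref{col_1}, and this proof is essentially a one-line instantiation once the set $A = X \setminus \{x\}$ is chosen. I would therefore write it compactly, letting Corollary~\ref{col_1} do the work.
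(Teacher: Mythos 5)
Your proof is correct and is essentially the derivation the paper intends: the corollary is stated without proof immediately after Corollary~\ref{col_1}, and instantiating that corollary with $A = X \setminus \{x\}$ (non-closed precisely because $x$ is non-isolated), then noting $\overline{A}\setminus A = \{x\}$ and $P \cap A = P \setminus \{x\}$, is exactly the one-line argument expected. Both identifications are verified correctly, so nothing is missing.
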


We give the following definition.
\begin{definition}\label{ISC}
 The property $\mathscr{P}$ is ICS  (independent of the containing subspace) in $X$ if
 $P\subseteq X$ has a property $\mathscr{P}$ in $X$ if and only if
 $P$ has a property $\mathscr{P}$ in $Y$ for any
 subset $Y$ such that $P\subseteq Y.$
\end{definition}

%Note that from all nine properties listed in the Introduction,
%only property $\omega$N is not ICS.

      The next theorem shows that hereditarily $\mathscr{P}-$space is $\mathscr{P}-$tight if property $\mathscr{P}$ is ICS in $X.$
\begin{theorem}
Let property $\mathscr{P}$ be ICS in $X.$ A space $X$ is
hereditarily $\mathscr{P}-$space if and only if
  for any $A\subset X$ and $x \in \overline{A} \setminus A$ there exists
  $P\subseteq A$ which have a property $\mathscr{P}$
  such that $x\in \overline{P}.$
\end{theorem}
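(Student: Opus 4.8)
The statement is a biconditional about a space $X$ whose property $\mathscr{P}$ is assumed ICS. I want to show: $X$ is hereditarily $\mathscr{P}$-space if and only if for every $A \subseteq X$ and every $x \in \overline{A} \setminus A$ there exists $P \subseteq A$ with property $\mathscr{P}$ and $x \in \overline{P}$. The plan is to prove each direction separately, leaning heavily on Corollary~\ref{col_1} (applied not to $X$ itself but to subspaces of $X$) and using the ICS hypothesis precisely at the point where I need to transfer the property $\mathscr{P}$ of a set $P$ between the subspace it lives in and the ambient $X$.

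For the forward direction, I would fix $A \subseteq X$ and a point $x \in \overline{A} \setminus A$. The subspace I work in is $Y = A \cup \{x\}$, which carries the subspace topology from $X$ and is itself a $\mathscr{P}$-space by the hereditary assumption. In $Y$ the set $A$ is non-closed (its closure in $Y$ contains $x$), so I apply Corollary~\ref{col_1} inside $Y$: there exist a point of $\overline{A}^{Y} \setminus A$ and a subspace $Q \subseteq Y$ with property $\mathscr{P}$ such that that point lies in the $Y$-closure of $Q \cap A$. Since $Y \setminus A = \{x\}$, the point produced must be $x$ itself, so $x \in \overline{Q \cap A}^{Y}$. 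Setting $P = Q \cap A \subseteq A$, I get $x \in \overline{P}$ (closures in $Y$ and in $X$ agree for subsets of $Y$, as $Y$ has the subspace topology). The remaining task is to confirm $P$ has property $\mathscr{P}$: $Q \cap A$ has property $\mathscr{P}$ as a subspace of $Q$ (or of $Y$), and here the ICS hypothesis lets me conclude $P$ has property $\mathscr{P}$ in $X$ as well, which is what the statement demands.

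For the converse, I assume the closure condition and must show every subspace $Z \subseteq X$ is a $\mathscr{P}$-space. I would verify the criterion of Theorem~\ref{F}(ii) within $Z$: given a non-closed set $A \subseteq Z$, pick $x \in \overline{A}^{Z} \setminus A$. Because closures in $Z$ are traces of closures in $X$, we also have $x \in \overline{A}^{X} \setminus A$, so the hypothesis yields $P \subseteq A$ with property $\mathscr{P}$ and $x \in \overline{P}$. Then $P \subseteq A \subseteq Z$, and $P \cap A = P$ is non-closed in $Z$ (its $Z$-closure contains $x \notin P$). By ICS, $P$ retains property $\mathscr{P}$ relative to $Z$, so $P$ witnesses condition (ii) of Theorem~\ref{F} for the subspace $Z$, making $Z$ a $\mathscr{P}$-space.

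The main obstacle, and the reason the ICS hypothesis is indispensable, is the bookkeeping of which ambient space the property $\mathscr{P}$ is being asserted in. A set $P$ can be Lindelöf (or have any of the listed properties) as a subspace regardless of the surrounding space only because those properties are intrinsic to $P$; ICS encodes exactly this. Without it, the set $P$ extracted inside the subspace $Y$ or $Z$ might fail to "have property $\mathscr{P}$ in $X$" in the sense required by the definitions, and the two directions would not match up. I expect the delicate step to be verifying that the point extracted by Corollary~\ref{col_1} in $Y$ is forced to be $x$ (rather than some other limit point), which is what makes the hereditary hypothesis give a limit set $P$ sitting inside $A$ itself rather than merely inside $A \cup \{x\}$.
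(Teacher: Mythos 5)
Your converse direction is sound and matches the paper's: from the closure condition, a set $A\subseteq Z$ that is non-closed in a subspace $Z$ has a point $x\in\overline{A}^{Z}\setminus A$, the hypothesis (applied in $X$) gives $P\subseteq A$ with property $\mathscr{P}$ and $x\in\overline{P}$, and then $P=P\cap A$ is non-closed in $Z$; your invocation of ICS there, to regard $P$ as having $\mathscr{P}$ as a subset of $Z$ rather than of $X$, is exactly the legitimate use of that hypothesis. The forward direction, however, has a genuine gap at the step where you set $P=Q\cap A$ and assert that ``$Q\cap A$ has property $\mathscr{P}$ as a subspace of $Q$ (or of $Y$).'' That assertion is \emph{heredity} of $\mathscr{P}$, which fails for most properties on the paper's list: subspaces of Lindel\"of, ccc, $\sigma$-compact or weakly Lindel\"of spaces need not retain those properties. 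ICS cannot repair this: it only says that whether a \emph{fixed} set has $\mathscr{P}$ is independent of the ambient space in which it is tested; it says nothing about passing from $Q$ to the smaller set $Q\cap A$. So Corollary \ref{col_1} applied in $Y=A\cup\{x\}$ produces a $\mathscr{P}$-set $Q$ with $x\in\overline{Q\cap A}$, but gives no $\mathscr{P}$-set \emph{inside} $A$, which is what the statement demands. (The step you flagged as delicate --- that the extracted limit point must be $x$ --- is in fact fine, since $\overline{A}^{Y}\setminus A=\{x\}$.)

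You should know that the paper's own proof makes the very same leap: from ``$B=A\cup\{x\}$ is a $\mathscr{P}$-space and $x$ is non-isolated in $B$'' (Corollary \ref{col_2}) it concludes outright that some $P\subseteq A$ with $\mathscr{P}$ has $x\in\overline{P}$, silently discarding $x$ from the witness set --- the identical unjustified passage to a subset. Worse, for $\mathscr{P}=L$ the gap appears irreparable: let $X=\omega_{1}+1$ with the order topology. Every subspace $Y\subseteq X$ is an $L$-space: a non-closed $A'\subseteq Y$ has a limit point $y\notin A'$; if $y<\omega_{1}$, a sequence from $A'$ converging to $y$ gives a countable (hence Lindel\"of) witness, while if $y=\omega_{1}$, an increasing cofinal set $A''\subseteq A'$ of order type $\omega_{1}$ makes $L=A''\cup\{\omega_{1}\}$ Lindel\"of with $A'\cap L=A''$ non-closed in $L$. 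Yet for $A=\omega_{1}$ and $x=\omega_{1}$ no Lindel\"of $P\subseteq A$ satisfies $x\in\overline{P}$: such a $P$ would be unbounded, and the open cover $\{P\cap[0,\alpha):\alpha<\omega_{1}\}$ has no countable subcover. Since Lindel\"ofness is intrinsic, $L$ is ICS, so the forward implication fails as stated; it goes through only when $\mathscr{P}$ is additionally assumed hereditary (as it is for $\omega$, hL, and $\omega$N), and any correct write-up --- yours or the paper's --- must make that extra hypothesis explicit at precisely the step discussed above.
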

\begin{proof}
  Let $X$ be a hereditarily $\mathscr{P}-$space, $A\subset X$ and $x \in \overline{A} \setminus A.$
  The subspace $B=A\cup \{ x\}$ is $\mathscr{P}-$space, $x$ is non-isolated point in $B.$
  Then there is $P\subseteq A$ with a property $\mathscr{P}$ such that $x\in \overline{P}.$

  Converse, let $B\subset X,$ $A\subseteq B$ and $A\cap P$ is closed in $B$
  for any $P\subseteq B$ which have a property $\mathscr{P}.$
  If $x\in \overline{A}\setminus A$ then
  there exists $P\subseteq A$ which have a property $\mathscr{P}$
  such that $x\in \overline{P},$ i.e. $A\cap P$ is non-closed in $B.$
\end{proof}
\begin{corollary}
Let property $\mathscr{P}$ be ICS in $X.$  A space $X$ is
$\mathscr{P}-$tight if and only if each subspace $Y$ of $X$ is
$\mathscr{P}-$tight.
\end{corollary}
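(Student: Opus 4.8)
The plan is to deduce both directions directly from the definition of $\mathscr{P}$-tightness, with the ICS hypothesis serving as the only nontrivial ingredient. The implication ``each subspace is $\mathscr{P}$-tight $\Rightarrow$ $X$ is $\mathscr{P}$-tight'' is immediate, since $X$ is a subspace of itself; so the whole content lies in the converse.

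For the converse, suppose $X$ is $\mathscr{P}$-tight. I would fix an arbitrary subspace $Y\subseteq X$, a set $A\subseteq Y$, and a point $x\in Y$ lying in the closure of $A$ computed in $Y$; writing $\overline{A}$ for the closure in $X$, the subspace-topology identity says this closure equals $\overline{A}\cap Y$, so in particular $x\in\overline{A}$. Applying $\mathscr{P}$-tightness of $X$ to the pair $(x,A)$ produces a set $B\subseteq A$ with $x\in\overline{B}$ and with $B$ having property $\mathscr{P}$ in $X$. Since $x\in Y$, the same identity shows $x\in\overline{B}\cap Y$, i.e. $x$ lies in the closure of $B$ in $Y$. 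It remains only to verify that $B$ witnesses $\mathscr{P}$-tightness of $Y$, that is, that $B$ has property $\mathscr{P}$ in $Y$ as well.

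This last transfer is the one and only place where the hypothesis is used, and it is where I expect the real (if modest) obstacle to sit. Because $B\subseteq Y\subseteq X$, the assumption that $\mathscr{P}$ is ICS in $X$ says precisely that $B$ has property $\mathscr{P}$ in $X$ if and only if $B$ has property $\mathscr{P}$ in $Y$; hence $B$ carries $\mathscr{P}$ in $Y$, and $Y$ is $\mathscr{P}$-tight. I would emphasize that ICS is genuinely needed only for properties that are not intrinsic to the subspace: for $\omega$, $hL$, $\sigma$-$cmp$, $ccc$, $L$, $wL$ the set $B$ carries the same topology whether viewed inside $X$ or inside $Y$, so the transfer is automatic, whereas for $\omega$N the nowhere-density of $B$ depends on the ambient space and can be destroyed by passing to $Y$ — exactly the failure that the ICS assumption rules out. (Alternatively, one could route the argument through the preceding theorem, whose right-hand condition is essentially the definition of $\mathscr{P}$-tightness, but the direct computation above seems cleaner and makes the role of ICS transparent.)
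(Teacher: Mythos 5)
Your proof is correct. The trivial direction is as you say, and in the substantive direction each step checks out: from $x\in \mathrm{cl}_Y(A)$ you get $x\in\overline{A}$, the set $B\subseteq A$ supplied by $\mathscr{P}$-tightness of $X$ satisfies $x\in\overline{B}\cap Y=\mathrm{cl}_Y(B)$ because $B\subseteq A\subseteq Y$, and the ICS hypothesis (applied with your $Y$ as the intermediate subspace containing $B$) is exactly what transfers property $\mathscr{P}$ from $X$ to $Y$; the argument is also robust to whether the definition quantifies over all $x\in\overline{A}$ or only $x\in\overline{A}\setminus A$. Where you differ from the paper is in the route: the paper states this as a corollary of the immediately preceding theorem, which (under ICS) characterizes hereditarily $\mathscr{P}$-spaces by precisely the closure condition defining $\mathscr{P}$-tightness. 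On that route one reads the theorem backwards to conclude that a $\mathscr{P}$-tight $X$ is hereditarily a $\mathscr{P}$-space, notes that this property and the ICS hypothesis both pass down to every subspace $Y\subseteq X$, and then applies the theorem forwards inside $Y$ to recover $\mathscr{P}$-tightness of $Y$. Your direct computation short-circuits the detour through $\mathscr{P}$-generated spaces, is self-contained, and isolates the single use of ICS, quietly avoiding the small verification (that ICS in $X$ restricts to ICS in $Y$) which the paper's derivation implicitly requires; what the paper's route buys instead is the additional equivalence with ``hereditarily $\mathscr{P}$-space,'' linking the corollary to the main theme of its section. Your closing remark about which properties make the transfer automatic also matches the paper's own observation that $\omega$N-tightness is not hereditary, witnessed by the subspace $\{\frac{1}{n}: n\in\mathbb{N}\}\cup\{0\}$ of $\mathbb{R}$.
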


Note that $\omega$N-tight is not hereditarily property. The real
numbers $\mathbb{R}$ is $\omega$N-tight, but it subspace
$Y=\{\frac{1}{n}: \ n\in \mathbb{N} \}\cup\{0\}$ is not
$\omega$N-tight.

\section{On resolvability of $L$-spaces}

The maximal resolvability of $\omega-$tight spaces whose
dispersion character is uncountable was proved by E.G.~Pytkeev in
\cite{Pyt}. Therefore $\omega-$spaces with uncountable dispersion
character  are maximally resolvable. A.~Bella and V.I.~Malykhin
was proved that $\omega$N-tight spaces are maximally resolvable
\cite{Bella_Mal}.

E.~Hewitt in \cite{Hu43} constructed an example of countable
irresolvable normal space. Such space is obviously $\omega-$tight
($hL$-, $\sigma$-$cmp$-, $ccc$-, $L$-, $wL$-tight) space with
countable dispersion character. Therefore, in studying the
resolvability of these spaces, it is necessarily to consider the
spaces with uncountable dispersion character.

In this section we prove $\omega-$resolvability of regular
$L$-tight spaces which dispersion character is uncountable.
Consequently, $hL$-, $\sigma$-$cmp$-, $ccc$-, $wL$-tight spaces
are also $\omega-$resolvable (if $X$ is regular space then
properties $ccc$-tight, $L$-tight, $wL$-tight are equivalent).

The $\omega-$resolvability of $\omega$D-spaces was proved by
P.L.~Sharma and S.~Sharma in~\cite{Sharma}. V.I.~Malychin
constructed an example of irresolvable Hausdorf Lindel$\ddot{o}$f
space with uncountable dispersion character \cite{Mal98}.

It is clear that any Lindel$\ddot{o}$f space is $L$-space.
Therefore, the resolvability of $L$-spaces must be investigated in
the class of regular spaces.

\medskip

We will use a following

\begin{theorem}\label{th20} (Hewitt's criterion of resolvability \cite{Bella_Mal})
A topological space $X$ is resolvable ($\tau$-resolvable ) if and
only if for all open subset $U$ of $X$ there exist nonempty
resolvable ($\tau$-resolvable) subspace without isolated points.
\end{theorem}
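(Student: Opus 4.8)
The plan is to prove both implications of Hewitt's criterion, with essentially all of the content residing in the ``if'' direction, which I would settle by a maximality (Zorn) argument producing a dense $\tau$-resolvable subspace. Throughout I read the hypothesis in its intended form: every nonempty open $U\subseteq X$ \emph{contains} a nonempty $\tau$-resolvable subspace without isolated points.

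The forward direction is routine restriction. Suppose $X$ is $\tau$-resolvable, say $X=\bigcup_{\alpha<\tau}D_\alpha$ with the $D_\alpha$ pairwise disjoint and dense, and fix a nonempty open $U$. For every nonempty relatively open $V\subseteq U$ (which is open in $X$, as $U$ is open) density gives $V\cap D_\alpha\neq\emptyset$, so the sets $U\cap D_\alpha$ form $\tau$ pairwise disjoint subsets dense in $U$; thus $U$ is itself a $\tau$-resolvable subspace contained in $U$. Since a $\tau$-resolvable space with $\tau\geq 2$ can have no isolated point — such a point would be forced to lie in two disjoint dense pieces — $U$ is the required crowded $\tau$-resolvable subspace.

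For the converse I would first record the reduction lemma: \emph{if $X$ has a dense $\tau$-resolvable subspace, then $X$ is $\tau$-resolvable.} Indeed, writing a dense $\tau$-resolvable $D=\bigcup_{\alpha<\tau}E_\alpha$ with the $E_\alpha$ pairwise disjoint and dense in $D$, transitivity of density makes each $E_\alpha$ dense in $X$, and absorbing $X\setminus D$ into $E_0$ yields $\tau$ pairwise disjoint dense subsets covering $X$. So it suffices to produce a dense $\tau$-resolvable subspace. To this end, apply Zorn's lemma to the family of all collections of pairwise disjoint nonempty crowded $\tau$-resolvable subspaces of $X$, ordered by inclusion; unions of chains preserve pairwise disjointness, so a maximal collection $\{R_i:i\in I\}$ exists, and I set $D=\bigcup_i R_i$. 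For density: were $D$ not dense, $U:=X\setminus\overline{D}$ would be nonempty open, and the hypothesis would supply a nonempty crowded $\tau$-resolvable $R\subseteq U$ disjoint from every $R_i$, enlarging the collection and contradicting maximality. For $\tau$-resolvability of $D$: fix for each $i$ a decomposition $R_i=\bigcup_{\alpha<\tau}R_i^{\alpha}$ into pairwise disjoint sets dense in $R_i$, and put $E_\alpha=\bigcup_i R_i^{\alpha}$. These are pairwise disjoint (across $\alpha$ within each $R_i$, across $i$ by disjointness of the $R_i$) with union $D$, and since $R_i\subseteq\overline{R_i^{\alpha}}$ one gets $D=\bigcup_i R_i\subseteq\overline{E_\alpha}$, so each $E_\alpha$ is dense in $D$. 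Hence $D$ is a dense $\tau$-resolvable subspace, and the reduction lemma finishes the proof.

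I expect the converse to be the only real obstacle, and within it two points deserve care: choosing the correct poset so that Zorn applies cleanly, and the re-indexing $E_\alpha=\bigcup_i R_i^{\alpha}$, where the small but essential observation is that denseness of each $R_i^{\alpha}$ inside $R_i$ aggregates to denseness of $E_\alpha$ inside the whole union $D$. The density step is precisely where the standing hypothesis is consumed, through the contradiction with maximality.
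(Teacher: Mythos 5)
Your proof is correct, but there is nothing in the paper to compare it with: Theorem~\ref{th20} is stated as a known result with a citation to Bella and Malykhin \cite{Bella_Mal} (the criterion goes back to Hewitt \cite{Hu43}) and no proof is given in the paper. Your argument is the standard, self-contained one. The necessity direction is fine, including the observation that for $\tau\ge 2$ a $\tau$-resolvable space is crowded, so $U$ itself witnesses the condition. In the sufficiency direction, the Zorn construction of a maximal pairwise disjoint family $\{R_i\}$ of crowded $\tau$-resolvable subspaces, the maximality argument giving density of $D=\bigcup_i R_i$, the re-indexing $E_\alpha=\bigcup_i R_i^{\alpha}$ with the aggregation of relative density (each $R_i\subseteq\overline{R_i^{\alpha}}$, hence $D\subseteq\overline{E_\alpha}$), and the reduction lemma absorbing $X\setminus D$ into one colour class are all carried out correctly; this Zorn step is essentially the same mechanism behind the Comfort--Feng theorem that a union of resolvable subspaces is resolvable \cite{Comfort}, which the paper invokes elsewhere. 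Two remarks: first, the statement as printed omits ``nonempty'' before ``open subset $U$'' and ``contained in $U$'' after ``subspace''; your reading supplies both and is clearly the intended one. Second, your sufficiency argument never uses crowdedness of the subspaces (the maximality step only needs pairwise disjointness), so you have in fact proved the formally stronger criterion in which ``without isolated points'' is dropped from the sufficient condition; the crowdedness is only needed to make the necessity direction true in the stated biconditional form.
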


\medskip
  A.G. El'kin  proved the following elegant result

\begin{theorem}(Theorem 1 in  \cite{Elk})\label{th11} Let $X$ be a
collectionwise Hausdorff $\sigma$-discrete normal space that
satisfies the following condition:

(*) For each point $x\in X$ there exists a discrete set $D\subset
X$ such that $x\in \overline{D}\setminus D$. Then the space $X$ is
$\omega$-resolvable.

\end{theorem}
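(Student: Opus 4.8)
The plan is to argue directly through Hewitt's criterion (Theorem \ref{th20}) in its $\omega$-resolvable form: a space is $\omega$-resolvable provided every nonempty open set contains a nonempty crowded (isolated-point-free) $\omega$-resolvable subspace. So it suffices to produce, inside an arbitrary nonempty open $U\subseteq X$, a nonempty crowded subspace $W$ together with a partition into $\omega$ dense pieces. First I would record two consequences of the hypotheses that will drive the construction. Condition (*) forces $X$ to have no isolated points, since each point lies in $\overline{D}\setminus D$ for a discrete $D$. Moreover, whenever $x\in\overline{D}\setminus D$ with $D$ discrete, every neighborhood of $x$ meets $D$ in an \emph{infinite} set: otherwise, using that $X$ is $T_1$, one could delete the finitely many points of $D$ in some neighborhood and contradict $x\in\overline{D}$. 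This infinite accumulation is the mechanism that will later supply density.

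Second, I would build $W$ by a recursion over the tree $\omega^{<\omega}$, producing points $\{p_s : s\in\omega^{<\omega}\}$ and open ``boxes'' $\{V_s\}$ inside $U$. Put $p_\emptyset\in U$ in a box $V_\emptyset\subseteq U$. Given $p_s$ and its box $V_s\ni p_s$, apply condition (*) to choose a discrete set $D_s\subseteq V_s\setminus\{p_s\}$ accumulating at $p_s$, then use normality together with collectionwise Hausdorffness to enclose the points of $D_s$ in a pairwise disjoint family of open boxes $\{V_{s,i} : i\in\omega\}$, and let the children be $p_{s,i}\in V_{s,i}\cap D_s$. Set $W=\{p_s : s\in\omega^{<\omega}\}$. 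Because the boxes at each level are pairwise disjoint and nested, distinct subtrees do not interfere, so a basic neighborhood of $p_s$ in $W$ is a tail of its children together with their descendants; in particular every $p_s$ is a limit of its own children, whence $W$ has no isolated points.

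Third, I would resolve $W$. Each point other than the root is a child of a unique node, so I can distribute, at every node $s$, the children $\{p_{s,i}\}$ among $\omega$ color classes in such a way that each color occurs cofinally, that is, within every tail of $\{p_{s,i}\}$; this is possible precisely because the children accumulate at $p_s$ in the infinite sense noted above. These choices assemble into a single global coloring $c\colon W\to\omega$. Every nonempty relatively open subset of $W$ contains a whole tail of the children of some node, hence points of all colors, so each color class is dense in $W$; thus $W$ is $\omega$-resolvable. By Theorem \ref{th20}, $X$ is $\omega$-resolvable.

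The main obstacle is the recursion step. Condition (*) yields a discrete set accumulating at the parent, but this set may carry additional limit points and need not be \emph{closed} discrete inside the box, so collectionwise Hausdorffness does not apply verbatim to separate its points by disjoint boxes. Bridging this gap is exactly where normality and the $\sigma$-discrete decomposition $X=\bigcup_n E_n$ are meant to enter: normality to peel off the unwanted limit points, and the $\sigma$-discrete structure to thin $D_s$ level by level down to a closed discrete, separable set while preserving accumulation at $p_s$. Ensuring that this thinning does not destroy the accumulation at the parent, and verifying that the resulting $W$ carries exactly the expected tree topology so that the density computation in the third step is valid, is the delicate part of the argument.
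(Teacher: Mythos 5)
First, note that the paper itself gives no proof of Theorem \ref{th11}: it is imported verbatim from El'kin \cite{Elk}, so your attempt can only be measured against the statement itself, and against that it has a genuine hole. The obstacle you flag in your last paragraph is not a deferred verification but the entire content of the theorem. Once every point of a given open set is a limit point of a \emph{strongly} discrete set, $\omega$-resolvability already follows from the Sharma--Sharma result (Theorem \ref{Sharma} of this paper, whose proof is essentially your tree construction); what the hypotheses ``collectionwise Hausdorff $+$ normal $+$ $\sigma$-discrete'' are for is precisely the upgrade from discrete to strongly discrete. Your proposal reduces El'kin's theorem to Sharma--Sharma without ever performing that upgrade. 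And it does genuinely fail as set up: $D_s$ is closed and discrete only in the open subspace $X\setminus D_s'$, where $D_s'$ is the (closed, since $X$ is $T_1$) derived set of $D_s$, and the parent $p_s$ lies in $D_s'$; collectionwise Hausdorffness is a statement about closed discrete subsets of $X$, so it never applies, and it does not pass to sets closed merely in an open subspace. Normality cannot directly ``peel off'' $D_s'$ either, since $D_s$ is not closed, so the pair $(D_s,D_s')$ is not a pair of disjoint closed sets. Nor does the $\sigma$-discrete decomposition $X=\bigcup_n E_n$ help in the naive way: from $p_s\in\overline{\bigcup_n (D_s\cap E_n)}$ one cannot conclude $p_s\in\overline{D_s\cap E_n}$ for some $n$, because closure does not commute with countable unions. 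So the proposal contains no actual mechanism for the step on which everything depends.

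There is a second, independent error in your resolution step. The claim that ``a basic neighborhood of $p_s$ in $W$ is a tail of its children together with their descendants'' is false: an open $V\ni p_s$ traces on the children as an arbitrary infinite member of the filter that neighborhoods of $p_s$ induce on $D_s$, and condition (*) does not prevent this trace filter from being an ultrafilter. In that case two disjoint sets of children cannot both meet every neighborhood of $p_s$, so no per-node partition of the children into $\omega$ ``cofinal'' classes makes every class accumulate at $p_s$, and your density computation collapses. This part is repairable by the standard device that avoids per-node choices altogether: fix a partition $\omega=\bigcup_n A_n$ into infinite sets and let the $n$-th class be the set of nodes of $W$ whose tree level lies in $A_n$; any open set meeting $W$ contains, by descending along a branch, nodes of every sufficiently large level, hence meets every class. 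With that replacement your third step becomes sound, but the recursion step --- the actual theorem --- remains unproven.
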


Recall that a set $D\subseteq X$ is strongly discrete if for every
$x\in D$ there is an open neighborhood $U_x$ such that $U_x\cap
U_y=\emptyset$ for $x\neq y$.

\medskip

A point $x$ of a space $X$ is called $lsd$-point ({\it a limit
point of a strongly discrete subspace}) if there exists a strongly
  discrete subspace $D\subset X$ such that $x \in \overline{D}\setminus D$ \cite{Mal98}.

\medskip

By using idea of the proof of El'kin's Theorem \ref{th11},
P.L.~Sharma and S.~Sharma proved
 the following result \cite{Sharma}.

\begin{theorem}\label{Sharma} (P.L.~Sharma, S.~Sharma) Let $X$ be a $T_1$-space.
  If each $x\in X$ is a limit point of a strongly discrete subspace of $X$ then $X$ is a $\omega-$resolvable.
\end{theorem}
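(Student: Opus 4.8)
The plan is to adapt El'kin's transfinite construction (Theorem \ref{th11}), replacing his hypotheses of collectionwise-Hausdorffness, normality and $\sigma$-discreteness by the single assumption that every point is a limit of a \emph{strongly} discrete set. First I would record three elementary preliminaries. (i) $X$ has no isolated point: an isolated $x$ can never lie in $\overline{D}\setminus D$, so the hypothesis forces $X$ to be crowded, and the same remark shows every nonempty open subset is crowded. (ii) Since $X$ is $T_1$, whenever $x\in\overline{D}\setminus D$ the point $x$ is an $\omega$-accumulation point of $D$, i.e. every neighbourhood of $x$ meets $D$ in an infinite set. (iii) If $D$ is strongly discrete with pairwise disjoint witnessing neighbourhoods $\{U_d:d\in D\}$, then $D$ is closed and discrete in the open set $W=\bigcup_{d}U_d$, and a limit point $x\in\overline{D}\setminus D$ lies on the boundary of $W$. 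Combining these gives the resource lemma I will use repeatedly: every nonempty open $U$ contains an infinite strongly discrete set accumulating to a point $x\in U$, with all witnessing cells contained in $U$.

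To prove $\omega$-resolvability I would build pairwise disjoint sets $A_0,A_1,\dots$ by transfinite recursion, arranging that $x\in\overline{A_n}$ for every $x\in X$ and every $n<\omega$; then each $A_n$ is dense. Following El'kin, I process the points of $X$ along a well-ordering, and at a point $x$ with colour $n$ I wish to place into $A_n$ an infinite strongly discrete set accumulating to $x$. The decisive device is that the pairwise disjoint witnessing cells allow all of these insertions to be carried out inside pairwise disjoint open cells, so that the classes constructed so far remain disjoint and fresh points are always available; this is exactly the feature that strong discreteness supplies and that in Theorem \ref{th11} was extracted from collectionwise-Hausdorffness together with normality. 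By Hewitt's criterion (Theorem \ref{th20}) it is in fact enough to realise such a construction inside an arbitrary nonempty open set.

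The hard part is securing density of all classes simultaneously. The naive attempt — fix one strongly discrete $D$ accumulating to $x$ and split it into $\omega$ pieces, one per colour — fails, because the trace of the neighbourhood filter of $x$ on $D$ may be an ultrafilter, in which case only one piece of any partition of $D$ accumulates to $x$; mere accumulation of $D$ does not survive partition. This is where strong discreteness must be exploited more deeply. The approach I would take is to recurse into the disjoint cells: each marked point $d\in D$ inside its cell $U_d$ is, by preliminary (i) and the resource lemma applied to $U_d$, again a limit of a strongly discrete set lying in $U_d$, so I build a tree of cells and distribute the colours \emph{across the cells} rather than across a single set, pushing the colour requirements down into ever smaller cells and invoking the hypothesis afresh inside each. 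Verifying that this cell-tree genuinely makes each $A_n$ meet every nonempty open set — that is, that density is achieved despite the possible absence of convergent sequences at points of large character — is the technical heart of the argument and the step I expect to be the real obstacle; it is handled by the bookkeeping of El'kin's recursion, now driven by the disjoint-cell structure furnished by strong discreteness. Once density is secured, the sets $A_0,A_1,\dots$ witness that $X$ is $\omega$-resolvable.
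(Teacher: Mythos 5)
This theorem is not proved in the paper at all: it is quoted from Sharma--Sharma \cite{Sharma}, with only the remark that it is obtained by the idea of El'kin's Theorem~\ref{th11}. So your proposal must be judged on its own, and as it stands it has a genuine gap, which you yourself flag: the verification that the cell-tree produces $\omega$ many dense sets is called ``the technical heart'' and then dismissed with ``it is handled by the bookkeeping of El'kin's recursion.'' Nothing in the sketch says how colours are assigned to the tree, nor why each colour class accumulates at every relevant point, and this is exactly where the naive splitting failed (your ultrafilter observation is correct, as are preliminaries (i)--(iii) and the resource lemma: by $T_1$ and $x\notin D$, every neighbourhood of $x$ meets $D$ infinitely, and traces of $D$ and its cells on an open $U\ni x$ witness the localized hypothesis). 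A proof that defers its decisive step to unspecified bookkeeping is an outline, not a proof. There is also a mis-framing: you aim at making each $A_n$ meet every nonempty open subset of $X$ through a global El'kin-style well-ordered recursion, but once Theorem~\ref{th20} is invoked no global recursion is needed; what must be produced inside each nonempty open $U$ is a single nonempty $\omega$-resolvable crowded subspace, and the right claim is that the tree \emph{itself} is such a subspace.

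The gap can be closed along the lines you gesture at, but the missing content is this. Root the tree at some $x\in U$ with a strongly discrete $D\subset U$ accumulating to $x$, all cells inside $U$; for each node $p$ with cell $C(p)$, use the resource lemma to choose a strongly discrete set inside $C(p)$ accumulating to $p$, with its cells inside $C(p)\setminus\{p\}$ (shrinking by $T_1$), so that all levels are pairwise disjoint. Let $T$ be the union of all levels, partition $\omega$ into infinitely many infinite sets $S_0,S_1,\dots$, and let $A_n$ be the union of the levels indexed by $S_n$. Then prove, by induction on $m$: for every node $p$ and every $m>\mathrm{level}(p)$, each neighbourhood $V$ of $p$ meets level $m$ --- indeed $V$ meets the set at level $\mathrm{level}(p)+1$ accumulating to $p$, say in a node $q$, and $V\cap C(q)$ is a neighbourhood of $q$ to which the induction applies. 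Since each $S_n$ is cofinal in $\omega$, every $A_n$ is dense in $T$ and $T$ is crowded, so $T\subset U$ is a nonempty $\omega$-resolvable subspace and Theorem~\ref{th20} concludes. This level-indexed assignment and the accompanying induction are precisely what your proposal asserts without supplying; with them added your route is sound and is essentially the El'kin-style argument the paper attributes to \cite{Sharma}.
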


\begin{definition} A discrete set $D$ of cardinality $\omega_1$ is a correct discrete, if any $Y\subseteq D$ such that $|Y|=\omega_1$ has a
$\omega_1$- accumulation point.
\end{definition}

The following lemma is a generalized version of the lemma 2.1 in \cite{Fi03}.

 \begin{lemma}\label{lem1} Let $X$ be a regular space such that for each $x\in X$ there exists a correct discrete set $D_x$ of cardinality $\omega_1$ such that $x$ is  a
 $\omega_1$-accumulation point of $D_x$. Then $X$ is resolvable.
 \end{lemma}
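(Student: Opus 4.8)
The plan is to verify the hypotheses of Hewitt's criterion (Theorem \ref{th20}): it suffices to produce, inside every nonempty open set $U \subseteq X$, a nonempty subspace without isolated points that is resolvable. Fix such a $U$. First I would build, by recursion on the finite sequences $s \in \omega_1^{<\omega}$, a tree of distinct points $t_s \in U$ together with auxiliary open sets. Put $t_\emptyset$ to be any point of $U$. Given $t_s$, I use the hypothesis that $t_s$ is an $\omega_1$-accumulation point of the correct discrete set $D_{t_s}$: every neighborhood of $t_s$ meets $D_{t_s}$ in a set of cardinality $\omega_1$, so in particular $D_{t_s} \cap U$ has cardinality $\omega_1$. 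Using regularity, for each chosen successor $t_{s^\frown\alpha} \in D_{t_s} \cap U$ I select an open neighborhood $W_{s,\alpha} \ni t_{s^\frown\alpha}$ with $\overline{W_{s,\alpha}} \subseteq U$, the closures pairwise disjoint and missing $t_s$, small enough that the chosen successors still accumulate at $t_s$, and with the additional requirement that the entire subtree descending from $t_{s^\frown\alpha}$ be confined to $W_{s,\alpha}$. Let $S = \{ t_s : s \in \omega_1^{<\omega}\}$.

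The resulting $S \subseteq U$ has no isolated points, since each $t_s$ is an $\omega_1$-accumulation point of its immediate successors $\{ t_{s^\frown\alpha} : \alpha<\omega_1\} \subseteq S$. The confinement condition supplies the crucial transitivity of nearness: for any neighborhood $V$ of $t_s$ there are $\omega_1$ indices $\alpha$ with $t_{s^\frown\alpha}\in V$ whose whole subtree lies in $V$. I then colour $S$ by the parity of the length of $s$, obtaining a partition $S = A \sqcup B$ into even and odd levels. Both $A$ and $B$ are dense in $S$: given $t_s$ and a neighborhood $V$, choose a successor $t_{s^\frown\alpha} \in V$ whose subtree is contained in $V$; this subtree is infinite and hence contains nodes of both parities, so $V$ meets both $A$ and $B$, whence $t_s \in \overline{A} \cap \overline{B}$. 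Thus $S$ is a nonempty resolvable subspace of $U$ without isolated points, and Theorem \ref{th20} yields the resolvability of $X$.

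The main obstacle is the recursion step, and this is exactly where correctness enters. When selecting the $\omega_1$ successors of $t_s$ I must simultaneously keep them inside $U$, place them in pairwise disjoint small neighborhoods whose closures avoid $t_s$, keep all points of the (size $\omega_1$) tree distinct from the at most $\omega_1$ points already committed, and, most delicately, guarantee that $t_s$ remains an $\omega_1$-accumulation point of the successors that survive this thinning. The correctness assumption, that every uncountable subset of $D_{t_s}$ retains an $\omega_1$-accumulation point, is what prevents the accumulation structure from being destroyed when passing to the uncountable subfamilies forced by the distinctness and confinement constraints, while regularity supplies the shrinking neighborhoods needed for disjointness and subtree confinement. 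Carrying out this bookkeeping consistently over all $\omega_1$ nodes is the real work of the proof, and it is the point at which the present statement strengthens Lemma 2.1 of \cite{Fi03}.
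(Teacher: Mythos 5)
There is a genuine gap, and it sits exactly where you yourself locate ``the real work'': the recursion step cannot be carried out from the stated hypotheses. Your construction asks, for each node $t_s$, for $\omega_1$ successors in $D_{t_s}$ surrounded by open sets $W_{s,\alpha}$ with pairwise disjoint closures; that is precisely the assertion that an uncountable subfamily of the discrete set $D_{t_s}$ is \emph{strongly} discrete. In a regular space the usual shrinking induction yields this for countable discrete sets, but it breaks down at limit stages of uncountable cofinality $\omega$ (a point $x_\alpha$ can lie in the closure of the union of previously chosen neighborhoods without lying in any single closure), and a discrete set of size $\omega_1$ in a regular space need not be strongly discrete; nothing in the hypotheses (discreteness plus correctness) supplies this separation. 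Worse, your appeal to correctness to preserve the accumulation structure under thinning misreads the definition: correctness says every uncountable $Y\subseteq D_{t_s}$ has \emph{some} $\omega_1$-accumulation point, not that $t_s$ itself remains one. Since a single neighborhood of $t_s$ meets $D_{t_s}$ in a set of size only $\omega_1$, discarding $\omega_1$ points during the thinning can empty the family out of a neighborhood of $t_s$ entirely, so the surviving successors need not accumulate at $t_s$ at all --- and then $S$ may have isolated points and the parity classes need not be dense. (A smaller slip: the claimed ``transitivity of nearness'' is false as stated, since $W_{s,\alpha}$ is fixed before the neighborhood $V$ is given, so the confined subtree lies in $W_{s,\alpha}$, not in $V$; this one is repairable by using successors of the successor $t_{s^\frown\alpha}$ inside $V$, but the two gaps above are not.)

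It is instructive to compare with how the paper avoids both problems. It likewise starts from Hewitt's criterion, but it never attempts to separate the points of an uncountable discrete set. Inside the given open set it forms the countably iterated set $Y=\bigcup_{n} Y_n$, so that each $y\in Y$ has a witnessing correct discrete set inside $Y$, and then runs a transfinite recursion of length $\omega_1$ over an enumeration of $Y$, putting each newly chosen $D_\beta$ into one color class and the closed set $P(D_\beta)$ of its $\omega_1$-accumulation points into the other. The role of correctness there is exactly the countability consequence your argument lacks: if $U$ is open and $P(D)\subset U$, then $D\setminus U$ is at most countable (an uncountable remainder would have an $\omega_1$-accumulation point outside $P(D)$). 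This is what allows the recursion to pick $D_\beta$ inside a neighborhood whose closure avoids the previously built sets, because only countably many previously committed points can intrude there. Any salvage of your tree scheme would have to replace disjoint confinement by bookkeeping of this kind on the accumulation sets $P(D)$; as written, the proposal does not prove the lemma.
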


 \begin{proof}

 By Theorem \ref{th20}, it suffices to prove that each non-empty open set $V$ of $X$
contains dense-in-itself $Y$ such that
$\overline{Y}$ is resolvable.

 Denote by $P(D)$ the set of $\omega_1$-accumulation
   points of a correct discrete set $D$. Let us note that $\overline{P(D)}=P(D)$ and for any open set
   $U$ such that $P(D)\subset U$, the set $D\setminus U$ is at most
   countable.

   Let $y \in V$, $D_y \subset V$ is a correct discrete set of
cardinality $\omega_1$, such that $y \in P(D_y)$.  Let $Y_1 =
D_y$, $Y_n = \bigcup \{ D_y: y \in Y_{n-1}$,  where $D_y$ is
correct discrete set, $y$ is $\omega_1$-accumulation of $ D_y \}$, we put
$Y = \bigcup\limits_{n=1}^{\infty} Y_n$. It is clear that $|Y|=\omega_1$ and
any $y\in Y$ is a $\omega_1$-accumulation point of correct (in
$\overline{Y}$) discrete subset $D_y$ of $Y$.

Now we prove that $\overline{Y}$ is resolvable.

 First, we renumber the space $Y=\{ y_{\alpha} : \alpha < \omega_1 \}$.

For $\alpha < \omega_1$ we construct sets $A_{\alpha}^i\subset \overline{Y}$, $A_i =
\bigcup \limits_{\alpha < \omega_1} A_{\alpha}^i $ where $i = 1,
2$, such that $A_1 \cap A_2 = \emptyset$ and $\overline{A_1} =
\overline{A_2} \supset Y$.

Consider $y_1$. Let $D_1$ be a correct (in $X$) discrete space of
cardinality $\omega_1$, such that $y_1 \in P(D_1)$. Let $A_1^1 =
D_1$, $A_1^2 = P(D_1)$, $P_1 = P(D_1)$. Obviously, $A_1^1\cap
A_1^2=\emptyset$ and $\overline{A_1^1} \cap \overline{A_1^2}
\supset P_1$.

Suppose that for each $\alpha < \beta < \omega_1$ we construct
sets $P_{\alpha}\subset \overline{Y}$, $D_{\alpha}\subset Y$, $A_{\alpha}^1$, $A_{\alpha}^2$
with the following properties.

{\bf 1. } If $y_{\alpha} \in \left[ \bigcup\limits_{\eta < \alpha}
P_{\eta}\right]$, then $P_{\alpha}=D_{\alpha}= \varnothing$,
$A_{\alpha}^i = \bigcup\limits_{\eta < \alpha} A_{\eta}^i$, $i =
1, 2$.

{\bf 2. } If $y_{\alpha} \notin \left[ \bigcup\limits_{\eta <
\alpha} P_{\eta}\right]$, but $y_{\alpha} \in \left[
\bigcup\limits_{\eta < \alpha} A_{\eta}^1 \right] \bigcap \left[
\bigcup\limits_{\eta < \alpha} A_{\eta}^2 \right]$, then $P_{\alpha} = \{ y_{\alpha} \}$, $D_{\alpha}=
\varnothing$, $A_{\alpha}^1 = \bigcup\limits_{\eta < \alpha}
A_{\eta}^1$,  $A_{\alpha}^2 =
\bigcup\limits_{\eta < \alpha} A_{\eta}^2$.

{\bf 3. } If $y_{\alpha} \notin \left[ \bigcup\limits_{\eta <
\alpha} P_{\eta}\right]$ and $y_{\alpha} \notin \left[
\bigcup\limits_{\eta < \alpha} A_{\eta}^1 \right] \bigcap \left[
\bigcup\limits_{\eta < \alpha} A_{\eta}^2 \right]$, then $D_{\alpha}$ is
a correct discrete set of cardinality $\omega_1$,
such that $y_\alpha \in P(D_{\alpha})$, $(\bigcup\limits_{\eta<\alpha} D_{\eta})\bigcap D_{\alpha}=\emptyset$, $P_{\alpha} =
P(D_{\alpha})$. In this case $A_{\alpha}^1 = \bigcup\limits_{\eta < \alpha} A_{\eta}^1\bigcup
P_{\alpha}$,  $A_{\alpha}^2 = \bigcup\limits_{\eta < \alpha}
A_{\eta}^2\bigcup D_{\alpha}$.

{\bf 4. } $A_{\alpha}^1 \cap A_{\alpha}^2 = \varnothing$;
$A_{\alpha}^i$ form a monotonically nondecreasing sequence (by
$\alpha$), $\left[ \bigcup\limits_{\eta \le \alpha}
P_{\eta}\right] \subset \left[ A_{\alpha}^i \right] $ , and
$A_{\alpha}^i \subset \bigcup\limits_{\eta \le \alpha} (P_{\eta}
\cup D_{\eta})$, $i = 1, 2$.

Consider $y_{\beta}$. If $y_{\beta}\in \left[
\bigcup\limits_{\alpha < \beta} P_{\alpha}\right]$, we suppose
$P_{\beta}=D_{\beta}= \varnothing$, $A_{\beta}^i =
\bigcup\limits_{\alpha < \beta} A_{\alpha}^i$, $i = 1, 2$. It is
easy to see $\left[ \bigcup\limits_{\alpha \le \beta}
P_{\alpha}\right] \subset \left[ A_{\beta}^i \right] $, $i = 1, 2$
and $A_{\beta}^1 \cap A_{\beta}^2 = \varnothing$.

Indeed, by construction, $ \bigcup\limits_{\alpha \le \beta}
\left[ P_{\alpha}\right] \subset \left[ A_{\beta}^i \right] $, $i
= 1, 2$, then $\left[  \bigcup\limits_{\alpha \le \beta}
P_{\alpha}\right] \subset \left[  \bigcup\limits_{\alpha \le
\beta} \left[ P_{\alpha}\right] \right] \subset \left[ A_{\beta}^i
\right] $.

Let $y_{\beta}\notin \left[ \bigcup\limits_{\alpha < \beta}
P_{\alpha}\right]$, but $y_{\beta} \in \left[
\bigcup\limits_{\alpha < \beta} A_{\alpha}^1 \right] \bigcap
\left[ \bigcup\limits_{\alpha < \beta} A_{\alpha}^2 \right]$, then
 $P_{\beta} = \{ y_{\beta} \}$, $D_{\beta} = \varnothing$,
$A_{\beta}^i = \bigcup\limits_{\alpha < \beta} A_{\alpha}^i$, $i =
1, 2$.

It is easy to see that $A_{\beta}^i$ satisfy item {\bf 4}.

Finally the last case $y_{\beta} \notin \left[
\bigcup\limits_{\alpha < \beta} P_{\alpha}\right]$, and $y_{\beta}
\notin \left[ \bigcup\limits_{\alpha < \beta} A_{\alpha}^1 \right]
\bigcap \left[ \bigcup\limits_{\alpha < \beta} A_{\alpha}^2
\right]$.

Suppose, for definiteness, $y_{\beta} \notin \left[
\bigcup\limits_{\alpha < \beta} A_{\alpha}^1 \right]$. Consider a
neighborhood $U_\beta$ of $y_\beta$, such that the closure of
$U_\beta$ is disjoint with $\left[ \bigcup\limits_{\alpha < \beta}
A_{\alpha}^1 \right]$ and $\left[ \bigcup\limits_{\alpha < \beta}
P_{\alpha} \right]$.

In this neighborhood there is at most a countable of points of the
set $\bigcup\limits_{\alpha < \beta} A_{\alpha}^2$ because it intersect with $U_\beta$ can only with $D_\alpha$, $\alpha < \beta$
(by {\bf 4}).

For each $\alpha$ such that $D_\alpha \ne \varnothing$ and
$D_\alpha$ is a  discrete, $P(D_\alpha)\subset \overline{Y} \setminus [U_{\beta}]$,
hence, $D_\alpha \bigcap U_\beta$ is at most a countable. It
follows that $\left( \bigcup\limits_{\alpha < \beta} D_\alpha
\right) \bigcap U_\beta$ has a cardinality $<\omega_1$ and, by
{\bf 4}, $\left( \bigcup\limits_{\alpha < \beta} A_{\alpha}^2
\right)\bigcap U_{\beta}$ is countable. Let $D_\beta$ be a correct discrete set such that $D_\beta\subset U_\beta$, $y_\beta \in
P(D_\beta)$ and $D_\beta\cap \left( \bigcup\limits_{\alpha <
\beta} A_{\alpha}^2 \right)=\emptyset$.

Let $P_\beta = P(D_\beta)$, $A_{\beta}^1 = D_{\beta} \bigcup
\left( \bigcup\limits_{\alpha < \beta} A_{\alpha}^1 \right)$,
$A_{\beta}^2 = P_{\beta} \bigcup \left( \bigcup\limits_{\alpha <
\beta} A_{\alpha}^2 \right)$.

By construction, $A_{\beta}^1$, $A_{\beta}^2$ are disjoint sets
and has the property {\bf 4}.

We prove that $Y \subset \left[ \bigcup\limits_{\alpha < \omega_1}
P_\alpha \right]$. Suppose that $Y \setminus \left[
\bigcup\limits_{\alpha < \omega_1} P_\alpha \right] \ne
\varnothing$, consider $y_\gamma \in Y \setminus \left[
\bigcup\limits_{\alpha < \omega_1} P_\alpha \right]$. Then
$P_\gamma = \varnothing$, because if $P_\gamma \ne \varnothing$,
then $y_\gamma \in P_\gamma$. By {\bf 1, 2, 3}, we have $y_\gamma
\in \left[ \bigcup\limits_{\alpha < \gamma} P_\alpha \right]
\subset \left[ \bigcup\limits_{\alpha < \omega_1} P_\alpha
\right]$, a contradiction.

Let $A_i = \bigcup\limits_{\alpha < \omega_1} A_{\alpha}^i $, $i =
1, 2$. By construction, $A_1 \cap A_2 = \varnothing$ and $[A_1] =
[A_2] \supset Y$. So we have that $\overline{Y}$ is resolvable.

 \end{proof}

\begin{corollary}\label{cor5} Let $X$ be a regular space, $Y=\{x\in X : \exists$ countable discrete set $D_x$ such that $x\in \overline{D_x}\setminus D_x$ or $\exists$ correct discrete $D_x$   of cardinality $\omega_1$  such that $x\in \overline{D_x}\setminus D_x \}$ and $\overline{Y}=X$. Then $X$ is resolvable.

\end{corollary}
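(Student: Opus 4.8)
The plan is to deduce the statement from Hewitt's criterion (Theorem \ref{th20}): it suffices to produce, inside every nonempty open set $U$ of $X$, a nonempty resolvable subspace without isolated points. Since $\overline{Y}=X$, the set $Y$ is dense, so $U$ meets $Y$; I would fix $x\in U\cap Y$ and, using regularity, an open set $W$ with $x\in W\subseteq\overline{W}\subseteq U$. It then remains to manufacture a resolvable dense-in-itself subspace inside $\overline{W}$ out of the discrete set witnessing $x\in Y$.

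First I would normalize the two alternatives defining $Y$. If $x$ carries a correct discrete set $D_x$ of cardinality $\omega_1$ with $x\in\overline{D_x}\setminus D_x$, then either $x$ is already an $\omega_1$-accumulation point of $D_x$, or some open neighborhood $V$ meets $D_x$ in a countable set, in which case the standard fact that $x\in\overline{D_x\cap V}$ exhibits $x$ as a limit of a countable discrete subset of $D_x$. Thus every point of $Y$ is of one of two clean types: (a) an $\omega_1$-accumulation point of a correct discrete set, or (b) a limit of a countable discrete set. In type (a), replacing $D_x$ by $D_x\cap W$ localizes the witness inside $W$, since an $\omega_1$-subset of a correct discrete set is again correct and $x$ remains an $\omega_1$-accumulation point of $D_x\cap W$.

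For a type (a) point I would run the transfinite interleaving construction of Lemma \ref{lem1} inside $\overline{W}$: starting from the correct set $D_x\cap W\subseteq W$ together with its set of $\omega_1$-accumulation points, I would build the disjoint $\omega_1$-indexed sets $A_1,A_2$ exactly as there, keeping all chosen neighborhoods inside $W$, and obtain a set $Y'\subseteq\overline{W}\subseteq U$ with $[A_1]=[A_2]\supseteq Y'$, so that $\overline{Y'}$ is resolvable and dense-in-itself. For a type (b) point I would first upgrade the countable discrete witness to a \emph{strongly} discrete one with the same limit $x$ (in a regular Hausdorff space one selects the points successively and separates them by a nested family of pairwise disjoint neighborhoods), and then invoke Theorem \ref{Sharma} (equivalently El'kin's Theorem \ref{th11}) to obtain an $\omega$-resolvable, hence resolvable, dense-in-itself subspace sitting inside $\overline{W}$. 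In either case $U$ contains the required subspace, and Theorem \ref{th20} yields the resolvability of $X$.

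The main obstacle is the gap between the density hypothesis $\overline{Y}=X$ and the \emph{pointwise} hypotheses of Lemma \ref{lem1} and Theorem \ref{Sharma}, each of which requires that \emph{every} point of the relevant space be an accumulation point of a correct, respectively strongly discrete, set. The iteration that produces a self-accumulating set $Y'$ may leave the dense set of ``good'' points, so one must either perturb the successive discrete sets so that they consist of good points (using regularity together with the density of $Y$) or re-verify directly that the construction can always be continued inside $\overline{W}$. A secondary but genuine difficulty is the type (b) upgrade: passing from a merely discrete countable set to a strongly discrete one while keeping $x$ in its closure is delicate when $X$ is not first countable, and this is precisely the point at which strong discreteness, rather than bare discreteness, is essential, so that the countable analogue does not degenerate into a countable dense-in-itself space that could be irresolvable in the sense of Hewitt.
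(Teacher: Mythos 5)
Your proposal assembles the right ingredients (Hewitt's criterion, Lemma~\ref{lem1}, Theorem~\ref{Sharma}, and the correct normalization of the $\omega_1$-witnesses into ``$\omega_1$-accumulation point of a correct set'' versus ``limit of a countable discrete set''), but the obstacle you name in your final paragraph is not a secondary verification to be patched --- it is the actual content of the corollary, and your proposal does not overcome it. Both Lemma~\ref{lem1} and Theorem~\ref{Sharma} have \emph{pointwise} hypotheses: every point of the space being resolved must carry a witness. From a single $x\in U\cap Y$ you obtain one localized witness $D_x\cap W$, but the points of $D_x$ need not belong to $Y$ at all, so the iteration $Y_n=\bigcup\{D_y: y\in Y_{n-1}\}$ of Lemma~\ref{lem1} cannot be continued past the first step, and Theorem~\ref{Sharma} cannot be invoked on any subspace you have actually exhibited: $D_x\cup\{x\}$ is not dense-in-itself, and only $x$ is known to be an lsd-point of it. Your first suggested repair --- perturbing the witness points into $Y$ using density of $Y$ --- fails in a general regular space: choosing $y_d\in Y$ inside the isolating neighborhood of each $d\in D_x$ gives no control forcing $x\in\overline{\{y_d : d\in D_x\}}$, because there is no countable base at $x$ (nor any metric-like notion of ``nearby'') making the perturbed points enter arbitrary neighborhoods of $x$; your second suggestion (``re-verify directly'') is a restatement of the problem, not an argument. (Your type~(b) upgrade from discrete to strongly discrete is fine under regularity, by shrinking the isolating neighborhoods so their closures meet $D$ in one point and recursively subtracting finitely many closures; that part is standard.)

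The paper overcomes the obstacle not by strengthening the per-point constructions but by a global four-piece decomposition. Setting $A=\{x: x$ has a countable discrete witness$\}$ and $B=Y\setminus A$, it applies Theorem~\ref{Sharma} to $Int(A)$ and Lemma~\ref{lem1} (via its internal construction and Theorem~\ref{th20}) to $Int(B)$ --- and \emph{there} the pointwise hypotheses do hold, since $\overline{W}\subseteq Int(B)\subseteq B$ guarantees every point met in the iteration lies in $B$ and has a witness localizable inside $W$ (an $\omega_1$-sized subset of a correct discrete set is correct). On the remainder the paper uses no discrete-set machinery at all: with $Z=X\setminus(\overline{Int(A)}\cup\overline{Int(B)})$ and $C=Int((A\cup B)\cap Z)$, both $A\cap C$ and $B\cap C$ have empty interior, hence both are dense in $C$ and the partition $\{A\cap C,\,B\cap C\}$ itself resolves $C$; and on $D=X\setminus(\overline{Int(A)}\cup\overline{Int(B)}\cup\overline{C})$ the sets $Y\cap D$ and $D\setminus Y$ are both dense ($Y\cap D$ by the hypothesis $\overline{Y}=X$, and $D\setminus Y$ because a nonempty open subset of $D$ contained in $Y$ would be forced into $C$), so this pair resolves $D$. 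Gluing the four partial resolutions along $X=\overline{Int(A)}\cup\overline{Int(B)}\cup\overline{C}\cup\overline{D}$ resolves $X$. This dichotomy --- either one type of good point fills an open set, where the cited theorems genuinely apply, or the good points are codense, in which case the partition by type (or by membership in $Y$) is already a resolution --- is precisely the idea missing from your proposal.
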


\begin{proof}
  Let $A=\{x\in X:  \exists$ countable discrete set $D_x$ such that $x\in \overline{D_x}\setminus D_x \}$ and $B=Y\setminus A$.
  By Theorem \ref{Sharma}, the set $Int (A)$ is resolvable. Hence, there are disjoint sets $A_1$, $A_2\subset Int (A)$ such that $\overline{A_i}\supset Int (A)$ for $i=1,2$. If $Int (A)=\emptyset$ then we put $A_i=\emptyset$ for $i=1,2$.

  Now we prove that if $Int (B)\neq \emptyset$ then $Int (B)$ is resolvable. Let $W$ be an non-empty open set such that $\overline{W}\subset Int (B)$. Note that there is  a set $Y\subset W$ such that  $|Y|=\omega_1$ and
any $y\in Y$ is a $\omega_1$-accumulation point of correct (in
$\overline{Y}$ and also in $\overline{W}$) discrete subset $D_y$ of $\overline{Y}$. Then, by Lemma \ref{lem1}, $Int (B)$  is resolvable.

  Hence, there are disjoint sets $B_1$, $B_2\subset Int (B)$ such that $\overline{B_i}\supset Int (B)$ for $i=1,2$. If $Int (B)=\emptyset$ then we put $B_i=\emptyset$ for $i=1,2$.

 Let $Z=X\setminus(\overline{Int(A)}\cup \overline{Int(B)})$,
 $A_z=A\cap Z$, $B_z=B\cap Z$. Consider $C=Int(A_z\cup B_z)$. Let $C_1=A\cap C$, $C_2=B\cap C$ for
 $C\neq \emptyset$. Note that $A\cap B=\emptyset$, hence, $C_1\cap
 C_2=\emptyset$, $Int(C_1)=Int(C_2)=\emptyset$ and $C=C_1\cup
 C_2$. It follows that $C\subset \overline{C_i}$ for $i=1,2$.
 If $C=\emptyset$ then we assume $C_1=C_2=\emptyset$.

 Let $D=X\setminus(\overline{Int(A)}\cup \overline{Int(B)}\cup
 \overline{C})$. If $D=\emptyset$ we assume $D_1=(A\cup B)\cap D$
 and $D_2=(X\setminus(A\cup B))\cap D$. Note that $D_1$ and $D_2$
 are disjoint sets and are dense in $D$.
If $D=\emptyset$ then we assume $D_1=D_2=\emptyset$. Note that
$X=\overline{Int(A)}\cup \overline{Int(B)}\cup \overline{C}\cup
\overline{D}$. Finally, let $X_i=A_i\cup B_i\cup C_i\cup D_i$ for
$i=1,2$. It is clear $X=\overline{X_i}$ for $i=1,2$ and $X_1\cap
X_2=\emptyset$.

\end{proof}

The following theorem is the main result of this work.

\begin{theorem}\label{Main}
  The regular $L$-space with uncountable dispersion character is resolvable.
\end{theorem}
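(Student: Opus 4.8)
The plan is to deduce the theorem from Corollary~\ref{cor5}: it suffices to prove that the set $Y$ defined there (points that are limits of a countable discrete set, or $\omega_1$-accumulation points of a correct discrete set of size $\omega_1$) is dense in $X$. First I would note that $\Delta(X)>\omega$ forbids isolated points, since an isolated point would be a one-element, hence countable, open set. Thus every $x\in X$ is non-isolated, and because $X$ is an $L$-space, Corollary~\ref{col_2} yields a Lindel\"of subspace $P=P_x$ with $x\in\overline{P\setminus\{x\}}$. Writing $A=P\setminus\{x\}$, the problem localises to the regular \emph{Lindel\"of} space $P$ (regular plus Lindel\"of gives normal) together with a point $x\in\overline{A}\setminus A$: I must exhibit inside $A$ either a countable discrete set clustering at a suitable point, or a correct discrete set of cardinality $\omega_1$ clustering at one, and either outcome puts a point into $Y$.

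The decisive advantage of the hypothesis ``Lindel\"of generated'' is that \emph{correctness is automatic}. Indeed, in any Lindel\"of space every uncountable set $S$ has an $\omega_1$-accumulation point: otherwise each point of the space would have an open neighbourhood meeting $S$ in a countable set, a countable subfamily would cover by the Lindel\"of property, and $S$ would be countable. Applying this inside $P$, any relatively discrete $D\subseteq A$ with $|D|=\omega_1$ is automatically correct, because every $Z\subseteq D\subseteq P$ with $|Z|=\omega_1$ has an $\omega_1$-accumulation point in $P\subseteq X$. Hence, once I produce a relatively discrete subset of $P$ of size $\omega_1$ clustering (from within $P$) at a point, that point lands in $Y$ via Lemma~\ref{lem1}, while a countable discrete set clustering at a point lands it in $Y$ via Theorem~\ref{Sharma}.

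To extract a relatively discrete set I would run the usual recursion in the regular space $P$: having chosen $\{x_\beta:\beta<\alpha\}$ with $x\notin\overline{\{x_\beta:\beta<\alpha\}}$, pick by regularity an open $W_\alpha\ni x$ with $\overline{W_\alpha}$ disjoint from that set, then choose $x_\alpha\in A\cap W_\alpha$ (possible as $x\in\overline A$) together with a small neighbourhood $U_\alpha\subseteq\overline{U_\alpha}\subseteq W_\alpha$, $x\notin\overline{U_\alpha}$, stopping once $x\in\overline{\{x_\beta\}}$. Given a relatively discrete $D$ with $x\in\overline D$, the clean dichotomy is: if some open neighbourhood $N$ of $x$ meets $D$ in a countable set then, $N$ being open, $x\in\overline{D\cap N}$ and $D\cap N$ is a countable discrete set clustering at $x$, so $x\in Y$; otherwise every neighbourhood of $x$ meets $D$ uncountably, i.e. $x$ is a condensation point of $D$, and I would try to thin $D$ to a subset of size $\omega_1$ still clustering at $x$, which, by the preceding paragraph, again gives $x\in Y$.

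The hard part, and the main obstacle, is precisely this last thinning. It can genuinely fail at an individual point: a Lindel\"of subspace can look like $[0,\omega_2]$ with $x=\omega_2$, where $x$ is a condensation point of the relatively discrete set of successor ordinals but \emph{no} countable or $\omega_1$-sized subset clusters at $x$, the clustering cofinality being $\omega_2$. This is why I would prove density of $Y$ rather than ``$x\in Y$ for all $x$'': given a nonempty open $U$, choose by regularity an open $U'$ with $U'\subseteq\overline{U'}\subseteq U$ (uncountable since $\Delta(X)>\omega$) and look for a point of small clustering cofinality inside $\overline{U'}\subseteq U$. The core technical claim, which carries the weight of the theorem and generalises Lemma~2.1 of \cite{Fi03} as packaged in Lemma~\ref{lem1}, is that regularity together with the condensation fact lets one descend from any high-cofinality cluster point to a limit point of clustering cofinality $\le\omega_1$ lying in $\overline{U'}$: either a countable discrete subset already clusters somewhere in $\overline{U'}$ (handing $U$ a first-type point of $Y$), or an $\omega_1$-sized relatively discrete subset whose closure lies in $\overline{U'}$ has, by the condensation fact, an $\omega_1$-accumulation point in $U$, a correct second-type point of $Y$. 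Establishing this descent while controlling cardinality and keeping the limit point inside $U$ is the delicate recursion; once it is in hand, $Y$ meets every open set, so $\overline Y=X$, and Corollary~\ref{cor5} (via Theorem~\ref{th20}) gives that $X$ is resolvable.
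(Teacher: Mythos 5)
Your proposal is not a complete proof: the whole weight of the theorem is placed on the ``core technical claim'' (the descent from a cluster point of high clustering cofinality, such as $\omega_2$ in your $[0,\omega_2]$ example, to a nearby limit point of a countable discrete set or an $\omega_1$-accumulation point of an $\omega_1$-sized discrete set), and you explicitly leave that recursion unexecuted. As stated it is not clear it can be executed from the $L$-space hypothesis alone, and your plan also silently fails in a region your reduction does not see: open sets $U$ in which \emph{every} Lindel\"of subspace is countable. There Corollary~\ref{col_2} only hands you countable witnesses, and a countable set with $x$ in its closure need not contain any discrete subset with a limit point (Hewitt-type countable irresolvable spaces show this), so density of the set $Y$ of Corollary~\ref{cor5} cannot be established by discrete-set extraction on such $U$. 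The paper deals with this region by a different tool entirely: on it the space is $\omega$-tight, and Pytkeev's theorem gives maximal resolvability; similarly, uncountable hereditarily Lindel\"of subspaces of uncountable dispersion character are resolved by citing Juh\'asz--Soukup--Szentmikl\'ossy, and these two kinds of pieces are swept into unions $Z$ and $H$ that are resolvable as unions of resolvable spaces.

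The decisive idea you are missing is the engine that drives the $\omega_1$-length recursion on the remaining piece $K=X\setminus(\overline{Z}\cup\overline{H})$: there every relevant Lindel\"of subspace $L$ is uncountable but \emph{not} hereditarily Lindel\"of, hence contains a closed set $F$ that is not $G_\delta$ in $L$. One then recursively picks $x_\alpha\notin F$ together with open $U_\alpha\ni x_\alpha$ and $V_\alpha\supset F$ with disjoint closures, choosing $x_\beta\in\bigcap_{\gamma<\beta}\overline{V_\gamma}^{\,L}\setminus F$; it is precisely the failure of $F$ to be $G_\delta$ that guarantees this intersection still meets $L\setminus F$ at every countable stage, so the construction either halts with a countable non-closed (hence discrete-with-limit-point) initial segment or runs to length $\omega_1$. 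In the latter case your one genuinely correct observation --- that in a Lindel\"of space every uncountable set has an $\omega_1$-accumulation point, so discreteness implies correctness for free --- finishes exactly as in the paper, feeding Lemma~\ref{lem1} via Corollary~\ref{cor5}. So the descent you hoped for is achieved not by thinning a given discrete set at a fixed point $x$, but by aiming the construction at a closed non-$G_\delta$ set, and the cases where no such target exists must be handled by the Pytkeev and JSS theorems rather than by discrete sets; without these two ingredients your outline cannot be completed.
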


\begin{proof} Let $U$ be an open subset of $X$ such that each  Lindel$\ddot{o}$f subspace of $U$ at most countable. Note that if $U\neq \emptyset$ then $U$ is maximally resolvable  \cite{Pyt} because it is a $\omega$-tight and has uncountable dispersion character.

Consider a set $Z= \bigcup \{ U$, where  $U$ is an open subset of $X$ such that each  Lindel$\ddot{o}$f subspace of $U$ at most countable $\}$.
Then the set $Z$ is resolvable (as the union of resolvable subspaces \cite{Comfort}) or $Z=\emptyset$.

Let $Y=X \setminus [Z]$. By definition of $Y$ and Corollary \ref{col_2},
any open subset of $Y$ contains uncontable Lindel$\ddot{o}$f subspace $L$. If $L$ is a heriditaraly Lindel$\ddot{o}$f then it contains subspace $M$ of uncountable dispersion character. Then $M$ is resolvable (see \cite{JSS}).
Let  $H = \bigcup \{ M$, where $M$ is a heriditaraly Lindel$\ddot{o}$f  of uncountable dispersion character $\}$. The set $H$ is resolvable (as the union of resolvable subspaces \cite{Comfort}) or $H=\emptyset$.

Let $K=Y\setminus \overline{H}$. It remains to prove that $K$ is
resolvable. Let us note that if a subspace $L\subset K$ is not
heriditaraly Lindel$\ddot{o}$f, then it contains a closed set $F$
which is not $G_\delta$-set. Let $K\neq \emptyset$.

We claim that $K$ has the conditions of Corollary \ref{cor5}. Let
$V$ be an open set such that $\overline{V}\subset K$. Consider
$L\subset \overline{V}$  such that $L$ is Lindel$\ddot{o}$f, but
is not heriditaraly Lindel$\ddot{o}$f and $F\subset L$ such that
$F$ is a closed set of $L$, but is not $G_{\delta}$-set in $L$.

By induction on $\alpha$, we construct the required discrete
$D\subset L$.

Let $x_1\in L\setminus F$. By the regularity of $L$, there are an
open sets $U_1$ and $V_1$ such that $\overline{U_1}^{L}\cap
\overline{V_1}^{L}=\emptyset$, $x_1\in U_1$ and $F\subset V_1$.

Let we constructed $x_{\alpha}$, $V_{\alpha}$, $U_{\alpha}$ for
$\alpha<\beta$ such that $\overline{U_{\alpha}}^{L}\cap
\overline{V_{\alpha}}^{L}=\emptyset$, $x_{\alpha}\in U_{\alpha}$
and $F\subset V_{\alpha}$ and

1. $x_{\alpha}\in \bigcap\limits_{\gamma<\alpha}
\overline{V_{\gamma}^{L}}$;

2. $x_{\gamma}\notin U_{\alpha}$ for $\alpha\neq \gamma$;

3. $x_{\alpha}\notin F$;

4. $\bigcup\limits_{\gamma<\alpha} \{x_{\gamma}\}$ are closed
subsets of $L$ for $\alpha<\beta$.

If $\beta=\omega_1$ or $\bigcup\limits_{\gamma<\alpha}
\{x_{\gamma}\}$ is not closed set then inductive process is
completed.

If $\bigcup\limits_{\gamma<\alpha} \{x_{\gamma}\}$ is closed set
then there is $x_{\beta}\in \bigcap\limits_{\gamma<\beta}
\overline{V_{\gamma}}^{L}\setminus F$. There are an open sets
$V_{\beta}$ and $U_{\beta}$ such that $F\subset V_{\beta}$,
$x_{\beta}\in U_{\beta}$, $\overline{U_{\beta}}^{L}\cap
\overline{V_{\beta}}^{L}=\emptyset$, $\overline{V_{\beta}}^{L}\cap
(\bigcup\limits_{\gamma<\beta} \{x_{\gamma}\})=\emptyset$ and
$\overline{U_{\beta}}^{L}\cap (\bigcup\limits_{\gamma<\beta}
\{x_{\gamma}\})=\emptyset$.

By construction, if $|D|=\omega$ then $\overline{D}\setminus D\neq
\emptyset$.

If $|D|=\omega_1$, but $D$ contains a countable is not closed (in
$K$) subset $D_1$, then $D=D_1$.

If $|D|=\omega_1$ and each countable subset of $D$ is closed set
in $K$ then $D$ is a correct discrete set in Lindel$\ddot{o}$f
space $L$ and, hence, $D$ is a correct discrete set in
$\overline{V}$.

So $K$ is resolvable. Then $X$ is resolvable  as the union of
resolvable subspaces.
\end{proof}

Note that a regular $wL$-space is a $L$-space.

\begin{corollary} Let $\mathscr{P}\in \{\omega$N, $\omega$, $hL$, $\sigma$-$cmp$,
$ccc$, $L$,
 $wL \}$.  The regular $\mathscr{P}$-space with uncountable
dispersion character is
 resolvable.
\end{corollary}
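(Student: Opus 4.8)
The plan is to reduce every case to Theorem~\ref{Main}: I would show that, under regularity, a $\mathscr{P}$-space is already an $L$-space for each property $\mathscr{P}$ in the list. The dispersion character $\Delta(X)$ is an invariant of the space $X$ alone and does not depend on which generating property is used, so it remains uncountable throughout, and Theorem~\ref{Main} then applies to $X$ with no further work.

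The mechanism I would isolate first is a monotonicity principle for generated spaces: if every subspace of $X$ carrying property $\mathscr{P}$ also carries property $\mathscr{Q}$, then any $\mathscr{P}$-space is a $\mathscr{Q}$-space. This is immediate from Definition~\ref{P-s}. The implication ``$A$ closed $\Rightarrow$ $A\cap P$ closed in $P$'' always holds, so the content of being a $\mathscr{P}$-space is that a set $A$ is closed whenever $A\cap P$ is closed in $P$ for all $\mathscr{P}$-subspaces $P$. Now take $A\subseteq X$ with $A\cap Q$ closed in $Q$ for every $\mathscr{Q}$-subspace $Q$; since each $\mathscr{P}$-subspace is in particular a $\mathscr{Q}$-subspace, $A\cap P$ is closed for every $\mathscr{P}$-subspace $P$, whence $A$ is closed because $X$ is a $\mathscr{P}$-space. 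Thus $X$ is a $\mathscr{Q}$-space.

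With this in hand I would split the properties into two groups. For $\mathscr{P}\in\{\omega\mathrm{N},\omega,hL,\sigma\text{-}cmp\}$ (and trivially for $L$ itself) every subspace with property $\mathscr{P}$ is Lindel\"of: countable spaces and $\sigma$-compact spaces are Lindel\"of, and hereditarily Lindel\"of spaces are Lindel\"of. By the monotonicity principle each such $\mathscr{P}$-space is an $L$-space, so Theorem~\ref{Main} gives resolvability directly. The remaining properties are $ccc$ and $wL$, for which a $\mathscr{P}$-subspace need not be Lindel\"of; here I would route through weak Lindel\"ofness instead. A $ccc$ space is weakly Lindel\"of: given an open cover, take a maximal pairwise-disjoint family of open sets each contained in some member of the cover; by $ccc$ it is countable, by maximality its union is dense, and picking one cover member above each set yields a countable subfamily with dense union. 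Hence, by monotonicity, every $ccc$-space is a $wL$-space, and a $wL$-space is already in this group. Finally, the note preceding the corollary gives that a regular $wL$-space is an $L$-space, so in both cases $X$ is again a regular $L$-space of uncountable dispersion character and Theorem~\ref{Main} finishes the argument.

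The only genuinely topological inputs, and hence the steps I expect to carry the weight, are the two facts underlying the $wL$-routing: that $ccc$ implies weak Lindel\"ofness, and that regularity upgrades a $wL$-space to an $L$-space. These dispose of exactly the two properties in the list whose subspaces fail to be Lindel\"of in general; every other case is a purely formal class containment fed into the monotonicity principle.
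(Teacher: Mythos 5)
Your proposal is correct and follows exactly the route the paper intends: the paper states this corollary without proof, deriving it from Theorem~\ref{Main} together with the implications of Fig.~1 (each $\mathscr{P}$-space in the list is an $L$-space or at least a $wL$-space, by the same monotonicity principle the paper uses implicitly in the proof of Theorem~\ref{Ex1}) and the note immediately preceding the corollary that a regular $wL$-space is an $L$-space. You have merely made explicit the details the paper delegates to the diagram and that note --- the proof of the monotonicity principle and the standard argument that $ccc$ implies weak Lindel\"ofness --- so the two arguments coincide in substance.
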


\begin{lemma} Let $X$ be a regular resolvable space and $\Delta(X)>\omega$. Then there are disjoint dense in $X$ subsets $Y_1$, $Y_2$ of
$X$ such that $\Delta(Y_1)>\omega$.

\end{lemma}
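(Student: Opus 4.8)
The plan is to reduce to a partition of $X$ into two dense sets and then, region by region, to keep as $Y_1$ whichever of the two pieces is locally large. First I would use resolvability to fix disjoint dense sets $D_1,D_2$ and pass to a genuine partition $X=A\sqcup B$ with $A=D_1$ and $B=X\setminus D_1$; both are dense and $A\cap B=\varnothing$, $A\cup B=X$. Having an honest partition is what makes the cardinal bookkeeping below work: for every open $U$ one has $U=(U\cap A)\sqcup(U\cap B)$, and since $\Delta(X)>\omega$ every nonempty open $U$ satisfies $|U|\ge\omega_1$.

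Next I would locate where $A$ is ``large''. Set $P=\{x\in X:\ |U\cap A|>\omega \text{ for every open } U\ni x\}$; its complement $O=X\setminus P=\bigcup\{U\text{ open}: |U\cap A|\le\omega\}$ is open, so $P$ is closed. Two observations drive everything. First, if a nonempty open $W$ meets $P$ then $|W\cap A|>\omega$, directly from the definition of $P$; in particular every nonempty open $W\subseteq Int(P)$ has $|W\cap A|>\omega$. Second, if $W\subseteq O$ is nonempty open, I pick $x\in W$ and an open $U\ni x$ with $|U\cap A|\le\omega$; then $W\cap U$ is nonempty open, so $|W\cap U|\ge\omega_1$, and since $W\cap U=(W\cap U\cap A)\sqcup(W\cap U\cap B)$ with $|W\cap U\cap A|\le\omega$, I get $|W\cap B|\ge|W\cap U\cap B|=|W\cap U|\ge\omega_1$. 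Thus $A$ is uncountable on every open piece of $Int(P)$ while $B$ is uncountable on every open piece of $O$, and both $A$ and $B$ stay dense in these regions because they are dense in $X$.

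Then I would define the two sets by swapping roles across the two regions: $Y_1=(A\cap Int(P))\cup(B\cap O)$ and $Y_2=(B\cap Int(P))\cup(A\cap O)$. Since $Int(P)\subseteq P$ is disjoint from $O=X\setminus P$, and $A,B$ are disjoint, $Y_1\cap Y_2=\varnothing$. The open set $G=Int(P)\cup O$ is dense, because its complement $P\setminus Int(P)$ is the boundary of the closed set $P$ and hence nowhere dense; as $A$ and $B$ are dense in $X$, both $Y_1$ and $Y_2$ are dense in $G$ and therefore in $X$. For the dispersion character I take any nonempty open $W$; it meets the dense open $G$, so it meets $Int(P)$ or $O$. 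If $W\cap Int(P)\ne\varnothing$ then $|W\cap A\cap Int(P)|>\omega$ and $A\cap Int(P)\subseteq Y_1$; if $W\cap O\ne\varnothing$ then $|W\cap B\cap O|>\omega$ and $B\cap O\subseteq Y_1$. Either way $|W\cap Y_1|>\omega$, and since $Y_1$ is dense this is exactly $\Delta(Y_1)>\omega$.

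The main obstacle is conceptual rather than computational: it is recognizing that the globally large piece $Y_1$ can be assembled by choosing, over $Int(P)$ and over $O$ separately, whichever of $A,B$ is locally uncountable, and then checking that these two regions cover a dense set, i.e. that the leftover boundary $P\setminus Int(P)$ is nowhere dense and hence harmless for both density and dispersion. The cardinal arithmetic above (deleting an at-most-countable set from an uncountable open trace leaves an uncountable set) and the density verifications are then routine. Regularity is part of the standing hypotheses, but the argument I have sketched appears to use only resolvability together with $\Delta(X)>\omega$.
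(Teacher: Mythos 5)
Your proof is correct, but it implements the shared key idea --- locally swapping the two dense pieces wherever the first one is countable, using $\Delta(X)>\omega$ to conclude the other piece is locally uncountable there --- by a genuinely different mechanism than the paper. The paper runs a transfinite induction: it picks an open $U_1$ on which $X_1$ is locally countable (so $X_2$ is locally uncountable there), swaps the labels on $U_1$, and continues, building a maximal disjoint family $\{U_\alpha:\alpha<\beta\}$ of open sets with dense union and taking $Y_i=\bigcup_\alpha Z^i_\alpha$; the induction step as written is somewhat loose (e.g.\ the choice of $U_2$ and the passage from ``$\Delta(U_1\cap X_1)=\omega$'' to the needed local countability of $X_1$ on $U_1$ are left implicit). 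You replace this exhaustion with a single canonical dichotomy: the closed set $P=\{x:|U\cap A|>\omega$ for every open $U\ni x\}$ and its open complement $O$, swapping roles of $A$ and $B$ on $Int(P)$ versus $O$, with the leftover $P\setminus Int(P)$ a nowhere dense boundary that harms neither density nor dispersion. This buys you a shorter, induction-free argument whose cardinal bookkeeping (an uncountable open set minus a countable trace stays uncountable) is entirely local, and it makes explicit what is also true of the paper's proof but unstated there, namely that regularity is never used. Two minor remarks: your $Y_1\cup Y_2=G$ may be a proper subset of $X$, whereas the paper's subsequent theorem tacitly wants a partition $X=Y_1\cup X_1$; this is harmless, since replacing $Y_2$ by $X\setminus Y_1$ preserves disjointness and density. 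Also, your computation of $\Delta(Y_1)$ correctly uses that every nonempty relatively open subset of the dense set $Y_1$ is a trace $W\cap Y_1$ with $W$ open nonempty in $X$, so the uniform bound $|W\cap Y_1|>\omega$ is exactly what is required.
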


\begin{proof} Let $X=X_1\cup X_2$ and $\overline{X_i}=X$ for
$i=1,2$. Suppose that $\Delta(X_1)=\Delta(X_2)=\omega$. Consider
an open set $U_1$ of $X$ such that $\Delta(U_1\cap X_1)=\omega$.
Then  $\Delta(U_1\cap X_2)>\omega$. Let $Z^1_1=U_1\cap X_2$,
$Z^2_1=U_1\cap X_1$. If $\Delta(X_1\setminus
\overline{U_1})>\omega$ then inductive process is completed.
Suppose that $\Delta(X_1\setminus \overline{U_1})=\omega$. Let
$U_2$ be an open set of $X$ such that $\Delta(X_1\setminus
\overline{U_2})=\omega$. $Z^1_2=U_2\cap X_2$, $Z^2_2=U_2\cap X_1$
and so on. By inductive process, we construct a disjoint family
$\{U_{\alpha}, \alpha < \beta \}$ of open subsets of $X$ such that
$X = [{\bigcup\limits_{\alpha<\beta}U_{\alpha}}]$ and disjoint
sets $Z^1_{\alpha}\subset U_{\alpha}$, $Z^2_{\alpha}\subset
U_{\alpha}$. Let $Y_1=\bigcup\limits_{\alpha < \beta}
Z^1_{\alpha}$ and $Y_2=\bigcup\limits_{\alpha<\beta}
Z^2_{\alpha}$. It is clear that $X=\overline{Y_i}$ for $i=1,2$ and
$\Delta(Y_1)>\omega$.
\end{proof}

\begin{theorem}  The regular $L$-tight space $X$ of uncountable dispersion character is $\omega$-resolvable.
\end{theorem}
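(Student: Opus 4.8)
The plan is to bootstrap from resolvability to $\omega$-resolvability by repeatedly applying the lemma immediately preceding this theorem, which peels off a dense subset while preserving an uncountable dispersion character.

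First I would record two preliminary facts. (a) Every $L$-tight space is an $L$-space: if $A\subseteq X$ meets every Lindel\"of $P\subseteq X$ in a set that is closed in $P$, yet $x\in\overline{A}\setminus A$, then $L$-tightness supplies a Lindel\"of $B\subseteq A$ with $x\in\overline{B}$; the set $P=B\cup\{x\}$ is still Lindel\"of and $A\cap P=B$ fails to be closed in $P$ (since $x\in\overline{B}^{P}\setminus B$), a contradiction. Hence $X$ is a regular $L$-space with $\Delta(X)>\omega$, so Theorem~\ref{Main} shows $X$ is resolvable. (b) Since being Lindel\"of is an absolute (ICS) property of a subspace, $L$-tightness is hereditary, so every subspace of $X$ is again regular and $L$-tight.

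The heart of the argument is an $\omega$-step induction. Put $X_0=X$. Given a subspace $X_n\subseteq X$ that is dense in $X$, regular, $L$-tight and satisfies $\Delta(X_n)>\omega$, facts (a) and (b) make $X_n$ resolvable, so the preceding lemma yields disjoint sets $Y_1,Y_2$, dense in $X_n$, with $\Delta(Y_1)>\omega$. I set $D_n=Y_2$ and $X_{n+1}=Y_1$. By transitivity of density $X_{n+1}$ is dense in $X$; it is regular, $L$-tight by heredity, and of uncountable dispersion character, so the construction continues through all $n<\omega$.

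It remains to collect the output. The sets $D_n$ are pairwise disjoint, since for $m>n$ one has $D_m\subseteq X_m\subseteq X_{n+1}$, which is disjoint from $D_n$; and each $D_n$ is dense in $X$. Absorbing the remainder $X\setminus\bigcup_{n}D_n$ into $D_0$ produces a countable partition of $X$ into dense sets, that is, $\omega$-resolvability. The only delicate point, and the reason the preceding lemma is stated with the clause $\Delta(Y_1)>\omega$, is that the dispersion character must remain uncountable at every stage; the step that needs genuine care is verifying $\Delta(X_{n+1})>\omega$ from the density of $Y_1$ in $X_{n+1}$, so that the hypotheses of Theorem~\ref{Main} and of the splitting lemma are available again at the next stage.
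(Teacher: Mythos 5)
Your proposal is correct and takes essentially the same route as the paper's own proof: repeatedly apply the preceding splitting lemma, using heredity of $L$-tightness (Lindel\"ofness being ICS) and the resolvability of regular $L$-spaces (Theorem~\ref{Main}) to split off a new dense set at each of $\omega$ stages. You merely spell out the steps the paper leaves implicit, namely that $L$-tight implies $L$-space, that regularity and $L$-tightness pass to the subspaces $X_n$, and that the leftover set $X\setminus\bigcup_n D_n$ must be absorbed into one piece to obtain a genuine partition into dense sets.
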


\begin{proof}
% По теореме 9 пространство $X$ разложимо
  Let $X=Y_1\cup X_1,$ where $X_1,$ $Y_1$ are disjoint and dense in $X.$
  Let $\Delta(Y_1)>\omega$. % По лемме 2
  % В силу следствия 4,  $Y_1$ is $L$-space, следовательно, по теореме 9, $Y_1$ разложимо.
  Let $Y_1=Y_2\cup X_2,$ where $X_2,$ $Y_2$ are disjoint dense in $Y_1$ and $\Delta(Y_2)>\omega$. A space $Y_1$ is dense in $X$, consequently $X_2,$ $Y_2$ are dense in $X$ too. And so on.
  By inductive process, we construct a countable disjoint family
$\{X_n, \ n\in \omega\}$ of dense in $X$ sets.
\end{proof}

Note that a regular $wL$-tight is a $L$-tight.

\begin{corollary} Let $\mathscr{P}\in \{\omega$N, $\omega$, $hL$, $\sigma$-$cmp$, $ccc$,
$L$, $wL \}$.  The regular $\mathscr{P}$-tight space with
uncountable dispersion character is
 $\omega$-resolvable.
\end{corollary}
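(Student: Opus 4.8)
The plan is to iterate the preceding lemma, peeling off at each stage a dense subset while retaining a ``large'' remainder on which the lemma can be applied again; the engine of the induction is the Main theorem (Theorem \ref{Main}). To bring it to bear I would first check that \emph{every} regular $L$-tight space of uncountable dispersion character is resolvable. Indeed, given a non-closed $A$ and a point $x\in\overline{A}\setminus A$, $L$-tightness furnishes a Lindel\"of $B\subseteq A$ with $x\in\overline{B}$; since $B\subseteq A$ we have $\overline{P\cap A}=\overline{B}\ni x$ with $P=B$, so Corollary \ref{col_1} shows that $X$ is an $L$-space, and Theorem \ref{Main} then yields resolvability.

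Next I would record that $L$-tightness is inherited by all subspaces. Lindel\"ofness is ICS in the sense of Definition \ref{ISC} (whether a set is Lindel\"of depends only on its own subspace topology), so by the corollary asserting that an ICS property makes $\mathscr{P}$-tightness hereditary, every subspace of $X$ is again $L$-tight; and subspaces of a regular space are regular. Consequently any subspace $Y\subseteq X$ with $\Delta(Y)>\omega$ is once more a regular $L$-tight space of uncountable dispersion character, hence resolvable by the previous paragraph.

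Now I would run the recursion, following the author's outline. Put $Y_0=X$. Assuming $Y_{n-1}$ is a dense-in-$X$ subspace with $\Delta(Y_{n-1})>\omega$, it is resolvable by the above, so the preceding lemma supplies disjoint sets $X_n,Y_n\subseteq Y_{n-1}$, both dense in $Y_{n-1}$, with $\Delta(Y_n)>\omega$. Since density is transitive and $Y_{n-1}$ is dense in $X$, both $X_n$ and $Y_n$ are dense in $X$, which permits the construction to continue. The family $\{X_n:n\in\omega\}$ is pairwise disjoint, because $X_n\subseteq Y_{n-1}$ is disjoint from $Y_n$ while all later $X_m,Y_m$ lie inside $Y_n$; and every $X_n$ is dense in $X$. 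As $X=\bigl(\bigcup_{m\ge 1}X_m\bigr)\cup\bigcap_n Y_n$, adjoining the residual set $\bigcap_n Y_n$ to $X_1$ produces a partition of $X$ into countably many dense pieces, so $X$ is $\omega$-resolvable.

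The one delicate point is that resolvability must survive every stage of the induction, for otherwise the lemma could not be reapplied. This is exactly what the hereditary $L$-tightness secures: each $Y_n$ remains a regular $L$-tight space of uncountable dispersion character and hence falls under Theorem \ref{Main}, while the preservation of an uncountable dispersion character is built into the conclusion $\Delta(Y_n)>\omega$ of the lemma. I expect this interplay\,---\,hereditariness feeding the Main theorem to regenerate resolvability at each step\,---\,to be the main thing to get right.
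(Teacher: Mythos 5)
Your argument, as written, proves the corollary only for $\mathscr{P}=L$: every step of your proposal (Corollary \ref{col_1} applied to a Lindel\"of witness $B\subseteq A$, Theorem \ref{Main}, hereditariness via the ICS corollary, iteration of the lemma) concerns $L$-tightness, and you never return to the other six properties in the statement. The missing step is the reduction $\mathscr{P}$-tight $\Rightarrow L$-tight. For $\mathscr{P}\in\{\omega$N$,\ \omega,\ hL,\ \sigma$-$cmp\}$ this is immediate, since each of these properties implies that the witnessing set $B\subseteq A$ is Lindel\"of, so your proof silently covers those cases. But for $ccc$ and $wL$ it fails outright: a ccc or weakly Lindel\"of subspace need not be Lindel\"of, so your first paragraph does not show that a $ccc$-tight or $wL$-tight space is an $L$-space, and Theorem \ref{Main} cannot be invoked for them. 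The paper closes exactly this gap with the remark placed immediately before the corollary\,---\,that a regular $wL$-tight space is $L$-tight (equivalently, that for regular spaces the properties $ccc$-tight, $L$-tight and $wL$-tight coincide, a fact going back to \cite{J_M})\,---\,together with the observation that $ccc$ implies $wL$. Without citing or proving this equivalence, your proposal establishes the corollary only for the five properties that entail Lindel\"ofness.

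Apart from that, your treatment of the $L$-tight case is essentially the paper's own proof of its theorem, and in fact more careful: the paper's inductive step is terse, while you correctly supply the engine that keeps it running (subspaces of a regular $L$-tight space are regular and $L$-tight, hence $L$-spaces, hence resolvable by Theorem \ref{Main}, so the lemma may be reapplied). One small slip: the identity $X=(\bigcup_{m\ge 1}X_m)\cup\bigcap_n Y_n$ is false in general, because the lemma only yields two disjoint dense subsets of $Y_{n-1}$, not a partition of it, so points may be discarded at every stage. The repair is the one you gesture at, done properly: adjoin the full residual $X\setminus\bigcup_{m\ge 1}X_m$ (which contains, but generally exceeds, $\bigcap_n Y_n$) to $X_1$; density and disjointness are preserved and the pieces then cover $X$. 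This is harmless\,---\,indeed the paper itself glosses over the same point\,---\,but as stated your final partition claim does not hold.
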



\begin{thebibliography}{1}
\bibitem{A_1} A. V. Arhangel'skii,
{\it A characterization of very $k$-spaces}, Czechoslovak
Mathematical Journal, 18 (1968), 392–-395.


\bibitem{A_S} A. V. Arhangel'skii and D. N. Stavrova,
{\it On a common generalization of k-spaces and spaces with
countable tightness}, Topology Appl., {\bf 51} (1993), 261--268.


\bibitem{Elk} A.G. El'kin, {\it Regular maximal spaces}, Math.
Notes, {\bf 27}:2, (1980), 150--151.

\bibitem{J_M} Istv\'{a}n Juh\'{a}sz, Jan van Mill,
{\it Variations on countable tightness},  arXiv:1702.03714v1 [math.GN], 13 Feb. (2017).

\bibitem{Hu43} E. Hewitt,
{\it A problem of set-theoretic topology},
Duke Math. J. {\bf 10} (1943), 309--333.

\bibitem{JSS} I.~Juhasz, L.~Soukup, Z.~Szentmiklossy,
{\it Regular spaces of small extent are $\omega$-resolvable}, Fundamenta Mathematicae, {\bf228}:1, (2015),  27--46.
DOI: 10.4064/fm228-1-3

\bibitem{Mal98} V.I.~Malykhin,
{\it Borel resolvability of compact spaces and their subspaces},
Mathematical Notes, {\bf64}:5, (1998), 607--615.
DOI: 10.1007/BF02316285

\bibitem{Vel} N.V.~Velichko,
{\it Theory of resolvable spaces}, Math. Notes, {\bf 19}:1,
(1976), 65--68.

\bibitem{Pyt} E.G.~Pytkeev,
{\it On maximally resolvable spaces},
Trudy Matematicheskogo Instituta Imeni V. A. Steklova,
{\bf 154}, (1983), 209--213. [in russian]
Zbl 0529.54005

\bibitem{Eng} R. Engelking,
{\it General Topology}, Heldermann Verlag, Berlin,1989.

\bibitem{Mic} E.A.~Michael,
{\it A quintuple quotient quest},
General Topology and Appl.,
{\bf 2}, (1972),91--138.
https://doi.org/10.1016/0016-660X(72)90040-2
PII: 0016-660X(72)90040-2

\bibitem{Bella_Mal} A.~Bella, VI~Malykhin,
{\it Tightness and resolvability},
Commentationes Mathematicae Universitatis Carolinae,
{\bf 39}:1, (1998), 177--184.
Persistent URL: http://dml.cz/dmlcz/118996

\bibitem{Sharma} P.L.~Sharma, S.~Sharma,
{\it Resolution properties in generalized k-spaces},
Topology and its Applications,
{\bf 29}, (1989), 61--66.

\bibitem{Comfort} W.W.~Comfort, L.~Feng,
{\it The union of resolvable spaces is resolvable},
Math. Japonica ,
{\bf 38}, (1993), 413--414.

\bibitem{Fi03} M.A.~Filatova,
{\it About resolvability of finally compact spaces},
Mathematicheskii i pricladnoi analis: sb. nauch. tr.--
Tumen : TSU, (2003), p. 204--212. [in russian]

\end{thebibliography}
\end{document}